\newtheorem{theorem}{Theorem}[section] 
\newtheorem{prop}[theorem]{Proposition}
\newtheorem{proposition}[theorem]{Proposition}
\newtheorem{lemma}[theorem]{Lemma}
\newtheorem{corollary}[theorem] {Corollary} 
\newtheorem*{FTT}{Flat Torus Theorem}
\theoremstyle{definition}
\newtheorem{remark}[theorem]{Remark}
\def\modS{{\rm{Mod}}(S)}
\def\E{\mathbb E}
\def\G{\Gamma}
\def\Z{\mathbb Z}
\def\R{\mathbb R} 
\def\r{\rho}
\def\l{\lambda} 
\def\a{\alpha}
\def\b{\beta}
\def\c{\gamma}
\def\d{\delta}  
\def\g{\gamma}
\begin{document} 

\title[Actions of ${\rm{Aut}}(F_n)$
on CAT$(0)$ spaces]{The rhombic dodecahedron and semisimple actions of ${\rm{Aut}}(F_n)$
on CAT$(0)$ spaces}
\author{Martin R.~Bridson}
\address{Martin R.~Bridson\\
Mathematical Institute \\
24--29 St Giles'\\
Oxford OX1 3LB  \\ 
U.K. }
\email{bridson@maths.ox.ac.uk}

\date{22 Feb 2011}

\thanks{The work presented in this paper
was funded by a Senior Fellowship
from the EPSRC of Great Britain and a Wolfson Research Merit Award from the Royal
Society. A first draft was written while the author was on leave from Imperial College London, visiting the EPFL and the University of Geneva. The author thanks all of these organisations.}
  
\maketitle 
 
\begin{abstract} We consider actions of automorphism
groups of free groups by semisimple isometries on complete CAT$(0)$ spaces.
If $n\ge 4$ then each of the
Nielsen generators of {\rm{Aut}}$(F_n)$ has a fixed point. 
If $n=3$ then either each of the
Nielsen generators has a fixed point, or else they are hyperbolic and each
Nielsen-generated $\Z^4\subset{\rm{Aut}}(F_3)$ leaves invariant 
an isometrically embedded copy of Euclidean 3-space $\E^3\hookrightarrow X$ on
which it acts as a discrete group of translations with the
rhombic dodecahedron as a fundamental domain. An abundance
of actions of the second kind is described. 

Constraints on maps from {\rm{Aut}}$(F_n)$ to mapping class groups and linear
groups are obtained. If $n\ge 2$ then neither {\rm{Aut}}$(F_n)$
nor {\rm{Out}}$(F_n)$ is the fundamental group of a compact K\"ahler manifold.
\end{abstract}

{\em For Mike Davis on his 60th birthday, with affection and great respect}

\section{Introduction} 

This article is part of a project to understand the ways in which mapping class groups
and automorphism groups of free groups can act on {\rm{CAT$(0)$}} spaces.
Here we focus mainly (but not exclusively) on actions that are by semisimple isometries.
This includes all cellular actions on polyhedral complexes with only 
finitely many isometry types of cells \cite{mb:poly}.

The action of {\rm{Aut}}$(F_n)$ on the abelianization of $F_n$ gives an epimorphism 
${\rm{Aut}}(F_n)\to {\rm{GL}}(n,\Z)$. The inverse image of  ${\rm{SL}}(n,\Z)$
is generated by {\em{Nielsen transformations}}, which are the obvious lifts of
the elementary matrices:  fixing a
basis $\{a_1,\dots,a_n\}$ for $F_n$, one defines the left Nielsen transformations
 $\l_{ij}$ by $[a_i\mapsto a_ja_i,\, a_k\mapsto a_k (k\ne i)]$
and the right Nielsen transformations $\r_{ij}$ by $[a_i\mapsto a_ia_j,\, a_k\mapsto a_k (k\ne i)]$.

Semisimple isometries of  {\rm{CAT$(0)$}} spaces divide into  elliptics (those with fixed points) and
hyperbolics (those that have a non-trivial axis of translation); conjugate isometries are of 
the same type.
The Nielsen transformations are all conjugate in {\rm{Aut}}$(F_n)$, so the
semisimple actions of {\rm{Aut}}$(F_n)$ on {\rm{CAT$(0)$}} spaces divide into two classes: those where
the Nielsen transformations act as hyperbolic isometries and those where
they act as elliptic isometries. We shall prove that if $n\ge 4$
then there are no actions of the former type. The proof (which establishes
something more -- Proposition \ref{p:more}) is based on an idea of
Gersten \cite{gersten}.

\begin{theorem}\label{t:ell} If $n\ge 4$, then each Nielsen transformation in {\rm{Aut}}$(F_n)$ fixes a point
whenever {\rm{Aut}}$(F_n)$ acts by semisimple isometries on a complete 
{\rm{CAT$(0)$}} space.
\end{theorem}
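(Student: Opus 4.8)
The plan is to argue by contradiction. Because conjugate semisimple isometries have the same type and the same translation length, and because the Nielsen transformations are mutually conjugate in $\mathrm{Aut}(F_n)$, the action falls into exactly one of two cases: either every Nielsen transformation is elliptic, which is the desired conclusion, or every one is hyperbolic with a single common translation length $\ell$. I would assume the latter and work to force $\ell=0$.

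The mechanism rests on a Steinberg-type relation together with the commutations surrounding it. A short computation on the basis $\{a_1,\dots,a_n\}$ gives $[\l_{ij},\r_{jk}]=\l_{ik}^{-1}$ for distinct $i,j,k$; one also checks that $\l_{ik}$ commutes with $\r_{jk}$, that the $\r_{jk}$ for a fixed reading index commute with one another, and so on. Such commuting families generate free abelian subgroups all of whose nontrivial elements are hyperbolic, so by the Flat Torus Theorem each preserves a Euclidean flat $\E^r\hookrightarrow X$ on which it acts by translations. Writing $\tau(g)\in\R^r$ for the translation vector of $g$, the map $g\mapsto\tau(g)$ is a homomorphism with $\|\tau(g)\|=|g|$; in particular every Nielsen transformation in such a family has $\|\tau(g)\|=\ell$.

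Next I would convert conjugacy into metric data. Reading $[\l_{ij},\r_{jk}]=\l_{ik}^{-1}$ as $\l_{ij}\,\r_{jk}\,\l_{ij}^{-1}=\l_{ik}^{-1}\r_{jk}$ exhibits a conjugate of $\r_{jk}$ lying inside a flat; since conjugation preserves translation length, the right-hand side has length $\ell$, and since its vector is $\tau(\r_{jk})-\tau(\l_{ik})$ this pins the inner product $\tau(\l_{ik})\cdot\tau(\r_{jk})$, hence the angle between the two length-$\ell$ vectors. Running this, and its analogues, over all available indices pins the entire Gram matrix of the generating vectors inside one flat. For $n=3$ the resulting system is consistent: a suitable family of four mutually commuting conjugate Nielsen transformations has vectors of equal length $\ell$ meeting at the tetrahedral angle ($\cos\theta=-\frac13$) and summing to zero, so they fit exactly into $\E^3$ (their Gram matrix is positive semidefinite of rank $3$), with integer span the face-centred lattice whose fundamental domain is the rhombic dodecahedron. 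This is the borderline case, and the source of the rhombic-dodecahedron phenomenon described in the introduction.

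The role of $n\ge 4$ is that the extra index over-determines this configuration: the same relations now furnish at least five mutually commuting conjugate Nielsen transformations whose vectors are forced to have common length $\ell$ and pairwise inner product $-\frac13\ell^2$. But Euclidean space admits at most four equal-length vectors at the tetrahedral angle (four such vectors already lie in a $3$-space and sum to zero), because the corresponding Gram matrix, namely $\ell^2$ on the diagonal and $-\frac13\ell^2$ off it, acquires a negative eigenvalue once its size exceeds four. Hence $\ell=0$, a contradiction; the quantitative form of this dichotomy is what I would record as Proposition \ref{p:more}, following Gersten's idea of exploiting the transvection relations. I expect the main obstacle to be precisely the bookkeeping of the previous step: identifying the right family and verifying the \emph{closing} relation, that the relevant product of commuting conjugate Nielsen transformations acts as the identity on the flat, so that its vectors sum to zero and the tetrahedral angle is forced, while ensuring that every element whose translation length I invoke genuinely lies in a single flat, so that the Flat Torus Theorem and the additivity of $g\mapsto\tau(g)$ actually apply. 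One should resist the shortcut of iterating a conjugation to drive a translation length to infinity: for hyperbolic $u,v$ in a general CAT$(0)$ space the lengths $|v^m u|$ need not grow, so the argument must stay inside honest flats.
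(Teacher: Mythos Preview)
Your proposal is a plan rather than a proof, and the plan rests on a geometric picture that is not correct. You assert that for $n=3$ a suitable quadruple of commuting conjugate Nielsen moves acts on its invariant flat by equal-length vectors at the \emph{tetrahedral} angle $\arccos(-\tfrac13)$, summing to zero, and that this yields the face-centred lattice with rhombic-dodecahedral fundamental domain. Neither claim holds. The relations one can actually extract (see Lemma~\ref{l:out3}) give, for $u_1,u_2,v_1,v_2$ the translation vectors of $\l_{21}^{-1},\r_{21},\l_{31}^{-1},\r_{31}$, the constraints $u_1\perp u_2$, $v_1\perp v_2$, $u_1+u_2=v_1+v_2$, and $(u_1-u_2)\perp(v_1-v_2)$: the edge-vectors of an octahedron, not a tetrahedron. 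Four unit vectors at the tetrahedral angle summing to zero generate a \emph{body}-centred cubic lattice, whose Voronoi cell is the truncated octahedron, not the rhombic dodecahedron. So your $n=3$ borderline is wrong on the angles and on the resulting solid, and the extrapolation to ``five tetrahedral vectors'' for $n\ge 4$ has no foundation. You concede that the ``bookkeeping'' --- actually pinning the cross-angles such as that between $\l_{21}$ and $\l_{31}$ --- is the main obstacle; in fact no relation you cite fixes those angles to $-\tfrac13$, and you have not exhibited any system of constraints that becomes inconsistent when a fourth basis element is added.

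The paper's argument is entirely different and bypasses Gram matrices. Following Gersten, it introduces the auxiliary group $G_{p,q}=\langle\a,\b,\c,t\mid [t,\a],\ t\b t^{-1}=\b\a^p,\ t\c t^{-1}=\c\a^q\rangle$ with $p\ne q$ and observes that $t,\ \a^pt,\ \a^qt$ are mutually conjugate (via $\b,\c$), hence have equal translation length; on a $\langle\a,t\rangle$-flat these are three translations of the same length whose endpoints are collinear, which forces $\a$ to act trivially there (Lemma~\ref{l:pq}). For $n\ge 4$ one has enough basis elements to realise $G_{p,q}$ inside ${\rm Aut}(F_n)$ with $\a$ sent to a Nielsen move: working with a basis $a_0,\dots,a_{n-1}$, take $\a=R_{a_1}$, $\b=R_{a_{n-2}}$, $\c=R_{a_{n-1}}$, and $t$ the automorphism $a_{n-2}\mapsto a_{n-2}a_1^{\,p}$, $a_{n-1}\mapsto a_{n-1}a_1^{\,q}$ (Proposition~\ref{p:more}). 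This needs three indices $0,\,n-2,\,n-1$ distinct from $1$, which is exactly where $n\ge 4$ enters; no over-determination of a large flat is used.
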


If the dimension of the {\rm{CAT$(0)$}} space is sufficiently small, one can promote the
existence of fixed points for the individual Nielsen transformations to a
fixed point for the whole group -- see \cite{mb:helly}, \cite{mrb:crelle}. And by considering induced actions, 
one can extend Theorem \ref{t:ell} to finite-index subgroups  $\G<{\rm{Aut}}(F_n)$: any
power of a Nielsen transformation that lies in $\G$ fixes a point whenever $\G$ acts by semisimple
isometries on a complete  {\rm{CAT$(0)$}} space (see Section \ref{s:induce}). 
 In particular, if $\lambda_{ij}^p\in \G$ then $\lambda_{ij}^p$ must have finite order in $H_1(\G,\Z)$;
for if not then there would be a homomorphism $\G\to\Z$ mapping  $\l_{ij}^p$ 
non-trivially.

Despite recent progress in the understanding of maps between automorphism
groups of free groups and mapping class groups, 
many issues remain unresolved. For example, it is unknown
whether every homomorphism from {\rm{Aut}}$(F_n)$ to a mapping class group
has to have finite image if $n\ge 4$. Closed surfaces
of negative Euler characteristic admit semisimple actions on complete {\rm{CAT$(0)$}} spaces
where the elliptic isometries are the roots of multi-twists (\cite{mb:bill}, Theorem A), so
Theorem \ref{t:ell} constrains putative maps ${\rm{Aut}}(F_n)\to\modS$.

\begin{corollary}\label{c:mcg} If $n\ge 4$ and $\Gamma< {\rm{Aut}}(F_n)$
is a subgroup of finite index, then every homomorphism from $\Gamma$ to a mapping
class group sends powers of Nielsen transformations to roots of  (possibly trivial)
multi-twists.
\end{corollary}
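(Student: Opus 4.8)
The plan is to derive Corollary \ref{c:mcg} from Theorem \ref{t:ell} by translating a semisimple action on a CAT$(0)$ space into the language of mapping class groups via the cited result of \cite{mb:bill}. First I would recall that mapping class groups of surfaces admit semisimple actions on complete CAT$(0)$ spaces. The key input, referenced just before the corollary, is \cite{mb:bill}, Theorem A: a closed surface $S$ of negative Euler characteristic admits a semisimple action of $\modS$ on a complete CAT$(0)$ space in which an element acts elliptically precisely when it is a root of a (possibly trivial) multi-twist, and acts hyperbolically otherwise. So the strategy is to pull back this action along a putative homomorphism $\phi\colon\Gamma\to\modS$ and invoke Theorem \ref{t:ell}.

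Concretely, suppose $\Gamma<{\rm{Aut}}(F_n)$ has finite index, $n\ge 4$, and $\phi\colon\Gamma\to\modS$ is any homomorphism. Composing $\phi$ with the semisimple action of $\modS$ on the CAT$(0)$ space $X$ from \cite{mb:bill} yields a semisimple action of $\Gamma$ on $X$. Here I would be careful to note that semisimplicity is inherited: the image of each $\gamma\in\Gamma$ is a semisimple isometry of $X$ because $\phi(\gamma)\in\modS$ acts semisimply, and ellipticity/hyperbolicity of an isometry is an intrinsic property of how it acts on $X$. Now apply the finite-index extension of Theorem \ref{t:ell} described in the remarks following it (Section \ref{s:induce}): any power $\l_{ij}^p$ of a Nielsen transformation that lies in $\Gamma$ fixes a point whenever $\Gamma$ acts by semisimple isometries on a complete CAT$(0)$ space. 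Applied to the action just constructed, this forces $\phi(\l_{ij}^p)$ to be an elliptic isometry of $X$.

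The final step is purely a matter of reading off the conclusion from the characterisation of elliptic elements in \cite{mb:bill}. Since the only elliptic isometries in that action are the roots of (possibly trivial) multi-twists, $\phi(\l_{ij}^p)$ must itself be a root of a (possibly trivial) multi-twist. As every power of a Nielsen transformation lying in $\Gamma$ has this form, the corollary follows. I expect the main point requiring care — rather than a genuine obstacle — to be the verification that the pulled-back $\Gamma$-action is genuinely by semisimple isometries and that the finite-index version of Theorem \ref{t:ell} applies verbatim; both are guaranteed by the structure of the action in \cite{mb:bill} and by the induction argument promised in Section \ref{s:induce}, so the deduction is essentially formal once these cited results are in hand.
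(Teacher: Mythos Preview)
Your proposal is correct and follows exactly the route the paper indicates: pull back the semisimple action of $\modS$ on a complete {\rm{CAT$(0)$}} space furnished by \cite{mb:bill}, Theorem A, apply the finite-index extension of Theorem~\ref{t:ell} (via the induced-action argument of Section~\ref{s:induce}) to conclude that any power of a Nielsen transformation lying in $\Gamma$ acts elliptically, and then read off from the characterisation of elliptics in \cite{mb:bill} that its image is a root of a multi-twist. The paper does not spell out a separate proof of the corollary beyond the sentence preceding it, and your write-up is a faithful expansion of that sentence.
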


We shall prove in Theorem \ref{t:3toMCG} that this corollary fails when $n=3$.

\smallskip

The situation for $n=3$ is completely different. In the course of
their work on linear representations of ${\rm{Aut}}(F_n)$, 
Grunewald and Lubotzky \cite{GL} proved that ${\rm{Out}}(F_3)$ has a subgroup
of finite index that maps onto a non-abelian free group. 
This gives rise
to an abundance of semisimple actions of {\rm{Aut}}$(F_3)$ 
and {\rm{Out}}$(F_3)$ in which the Nielsen transformations
act hyperbolically, as we shall explain in Section \ref{s:N-hyp}. 
But across this enormous range of 
actions there is a beautiful geometric invariant associated to the
abelian subgroups of maximal rank generated by Nielsen transformations. Each
such subgroup is conjugate\footnote{${\rm{Aut}}(F_3)$ has virtual cohomological dimension 4,
so there are no free abelian subgroup of greater rank, but there is one other type of $\Z^4$,
as explained on page 154 of \cite{FH}.}
 to $\Lambda:=\langle \l_{21},\, \r_{21},\,\l_{31},\,\r_{31}\rangle \cong\Z^4$; we call 
 it a {\em{Nielsen  $\Z^4$}}.
 
 The {\em{rhombic dodecahedron}} is the Catalan solid that is
dual to the Archimedean cuboctahedron; it is described in more detail in
Section \ref{s:ddh}. Notice that in the following theorem no assumption is
made about the discreteness of the action.

\begin{theorem}\label{t:ddh}
Whenever ${\rm{Aut}}(F_3)$ acts
by semisimple isometries on a complete {\rm{CAT$(0)$}} space $X$, either
each Nielsen transformation fixes a point, or else each Nielsen $\Z^4\subset  {\rm{Aut}}(F_3)$
leaves invariant an isometrically embedded 3-flat $\E^3\hookrightarrow X$ on which
it acts as a discrete group of translations with Dirichlet domain a rhombic dodecahedron. 
\end{theorem}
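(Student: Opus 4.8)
The plan is to dispose of the easy alternative, reduce to the hyperbolic case, and then compute—inside the flat supplied by the Flat Torus Theorem—the Gram matrix of the translation vectors of the four generators of a Nielsen $\Z^4$. Since the Nielsen transformations are mutually conjugate they act as isometries of the same type; if they are elliptic we are in the first alternative, so assume they are hyperbolic and fix $\Lambda=\langle\lambda_{21},\rho_{21},\lambda_{31},\rho_{31}\rangle$. The abelian group $\Lambda$ acts by semisimple isometries, so (in the form of the Flat Torus Theorem that does not require properness: one still gets an invariant flat on which the group acts by translations) $\Lambda$ preserves a flat $\E^k$, $k\le4$, acting by translations through a homomorphism $\tau\colon\Lambda\to\R^k$. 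Because commuting semisimple isometries translate along a common flat lying in $\mathrm{Min}(\Lambda)$, one has $\ell(g)=|\tau(g)|$ for every $g\in\Lambda$; writing $v_1,\dots,v_4$ for the images of the four generators, the squared translation length is the quadratic form with Gram matrix $G=(\langle v_i,v_j\rangle)$, and the theorem becomes the assertion that $G$ has rank $3$ and that the resulting lattice is face-centred cubic.

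Normalise so the common translation length of the Nielsen generators is $1$, whence $\langle v_i,v_i\rangle=1$. First I would compute the off-diagonal entries by exhibiting explicit changes of basis of $F_3$ that conjugate the relevant products into Nielsen form. For example, in the basis $\{a_1,a_2,a_2^{-1}a_3\}$ the product $\lambda_{21}\lambda_{31}$ becomes $\lambda_{21}$, so $\ell(\lambda_{21}\lambda_{31})=1$, giving $|v_1+v_3|^2=1$ and $\langle v_1,v_3\rangle=-\tfrac12$; in the basis $\{a_1,a_2,a_3a_2\}$ the element $\lambda_{21}\rho_{31}^{-1}$ becomes $\lambda_{21}$, giving $|v_1-v_4|^2=1$ and $\langle v_1,v_4\rangle=\tfrac12$; analogous basis changes give $\langle v_2,v_4\rangle=-\tfrac12$ and $\langle v_2,v_3\rangle=\tfrac12$, while the involution $a_2\leftrightarrow a_3$ (which normalises $\Lambda$) supplies the remaining symmetric entries. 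The only entry not reached this way is $p:=\langle v_1,v_2\rangle$, because $\lambda_{21}\rho_{21}^{\pm1}$ abelianises to a non-primitive transvection and so is not conjugate to a Nielsen generator.

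The value of $p$ is pinned down by the crux of the argument: the inner automorphism $\iota_{a_1}$ is elliptic. One checks $\iota_{a_1}=\lambda_{21}\rho_{21}^{-1}\lambda_{31}\rho_{31}^{-1}\in\Lambda$, so $\tau(\iota_{a_1})=v_1-v_2+v_3-v_4$. To prove ellipticity set $t=\ell(\iota_{a_1})$; since the action is semisimple, $\ell(\iota_{a_1}^n)=nt$. For each $n$ the word $a_1^na_2$ is primitive, hence lies in the $\mathrm{Aut}(F_3)$-orbit of $a_1$, so $\iota_{a_1^na_2}=\iota_{a_1}^n\iota_{a_2}$ is conjugate to $\iota_{a_1}$ and has length $t$; the reverse triangle inequality gives $t=\ell(\iota_{a_1}^n\iota_{a_2})\ge nt-t$, and $n=3$ forces $t=0$. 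Therefore $v_1-v_2+v_3-v_4=0$, and expanding $0=|v_1-v_2+v_3-v_4|^2$ with the entries above yields $-4p=0$, so $p=0$. Consequently $G$ has rank $3$, its radical meets $\Lambda$ exactly in $\langle\iota_{a_1}\rangle\cong\Z$, and $\tau$ identifies $\Lambda/\langle\iota_{a_1}\rangle\cong\Z^3$ with a discrete rank-$3$ lattice $L$ in the $3$-flat $\E^3$.

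It then remains to recognise $L$. With $\langle v_i,v_i\rangle=1$, $\langle v_1,v_2\rangle=\langle v_3,v_4\rangle=0$, $\langle v_1,v_3\rangle=\langle v_2,v_4\rangle=-\tfrac12$ and $\langle v_1,v_4\rangle=\langle v_2,v_3\rangle=\tfrac12$, a short computation shows that $L$ has exactly twelve vectors of minimal length $1$, namely $\pm v_1,\pm v_2,\pm v_3,\pm v_4,\pm(v_1+v_3),\pm(v_2-v_3)$. A lattice in $\E^3$ with kissing number $12$ is face-centred cubic, and the face-centred cubic lattice has the rhombic dodecahedron as its Dirichlet (Voronoi) domain; applying this to every Nielsen $\Z^4$, each of which is conjugate to $\Lambda$, gives the theorem. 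I expect the substantive step to be the ellipticity of $\iota_{a_1}$ in the third paragraph: this is precisely what forces the rank to drop from $4$ to $3$—without it one could not rule out an invariant $4$-flat—whereas the identification of the twelve minimal vectors and of the Dirichlet domain is routine once the Gram matrix is in hand.
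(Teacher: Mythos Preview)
Your overall architecture is sound and the Gram-matrix computation is an attractive alternative to the paper's route. The paper does not compute inner products directly; instead it uses Lemma~\ref{l:null} (commutator and conjugation facts inside centralizers) to obtain just three orthogonality relations, $u_1\perp u_2$, $v_1\perp v_2$, $(u_1-u_2)\perp(v_1-v_2)$, and then feeds these into the octahedron criterion of Lemma~\ref{l:octo}. Your approach gives strictly more: the full Gram matrix, and a clean identification of the lattice via its kissing number. The explicit basis-changes showing that $\lambda_{21}\lambda_{31}$, $\lambda_{21}\rho_{31}^{-1}$, etc.\ are conjugate to single Nielsen moves are correct and nicely done.

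There is, however, a genuine gap in your proof that $\iota_{a_1}$ is elliptic. You invoke a ``reverse triangle inequality'' $\ell(\alpha\beta)\ge \ell(\alpha)-\ell(\beta)$ for isometries of a {\rm CAT}$(0)$ space, but this inequality is \emph{false} for non-commuting isometries, and $\iota_{a_1}$ and $\iota_{a_2}$ do not commute. A simple counterexample in $\E^2$: let $\alpha$ be translation by $(L,0)$ and $\beta$ the half-turn about the origin; then $\alpha\beta$ is a half-turn about $(L/2,0)$, so $\ell(\alpha\beta)=0$ while $\ell(\alpha)-\ell(\beta)=L>0$. More to the point, in $\mathbb{H}^2$ two hyperbolic isometries can compose to an elliptic. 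Thus from $\ell(\iota_{a_1}^n\iota_{a_2})=t$ and $\ell(\iota_{a_1}^n)=nt$, $\ell(\iota_{a_2})=t$ you cannot conclude $t\ge nt-t$.

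The paper proves ellipticity of ${\rm ad}_{a_1}$ by a different device (Lemma~\ref{l:inner}): it exhibits a homomorphism $G_{p,q}\to{\rm Aut}(F_3)$ sending $\alpha\mapsto{\rm ad}_{a}$, where $G_{p,q}=\langle\alpha,\beta,\gamma,t\mid [t,\alpha],\ t\beta t^{-1}=\beta\alpha^p,\ t\gamma t^{-1}=\gamma\alpha^q\rangle$ with $p\neq q$; one then argues inside the abelian group $\langle\alpha,t\rangle$ on its invariant flat that the three conjugate elements $t$, $t\alpha^p$, $t\alpha^q$ cannot all have the same translation length unless $\alpha$ is elliptic. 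If you plug this in for your third paragraph, the rest of your argument goes through: once $v_1-v_2+v_3-v_4=0$ you correctly obtain $p=0$ (note that the $a_2\leftrightarrow a_3$ symmetry gives $\langle v_3,v_4\rangle=p$ as well, which you use implicitly), and your identification of the lattice as face-centred cubic by its twelve minimal vectors is fine.
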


When $n\ge 6$ one can sharpen Theorem \ref{t:ell} by proving that in any action of
{\rm{Aut}}$(F_n)$ by isometries on a complete {\rm{CAT$(0)$}} space the Nielsen transformations have zero
translation length, i.e. they are either elliptic or else they are neutral parabolics. 
Actions of the second type arise from linear representations ${\rm{Aut}}(F_n)\to {\rm{GL}}(d,\R)$
via the action of ${\rm{GL}}(d,\R)$ on its symmetric space. The fact that Nielsen
transformations must act as neutral parabolics imposes constraints on the representation
theory of {\rm{Aut}}$(F_n)$ that supplement  
\cite{DF} and \cite{rapin}.
In Section \ref{s:kahler} we explain how the rigidity of the standard linear
representation \cite{DF} can be combined with 
Simpson's results \cite{carlos} to prove:

\begin{theorem}\label{t:kahler} If $n\ge 2$, then neither {\rm{Aut}}$(F_n)$ nor {\rm{Out}}$(F_n)$ is the
fundamental group of a compact K\"ahler manifold.
\end{theorem}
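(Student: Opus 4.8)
The plan is to assume for contradiction that $\Gamma$ is one of $\mathrm{Aut}(F_n)$ or $\mathrm{Out}(F_n)$ and that $\Gamma=\pi_1(M)$ for a compact K\"ahler manifold $M$, and then to extract a contradiction from the standard representation. Write $\rho\colon \Gamma\to \mathrm{GL}(n,\Z)\hookrightarrow \mathrm{GL}(n,\mathbb C)$ for the standard representation; it factors through $\mathrm{Out}(F_n)$, so a single $\rho$ serves both groups. Being the restriction of the tautological representation of $\mathrm{GL}(n)$ to the Zariski-dense subgroup $\mathrm{GL}(n,\Z)$, it is absolutely irreducible, hence reductive, and it has infinite image. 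By the rigidity of the standard representation \cite{DF}, $\rho$ is an isolated point of the character variety of $\Gamma$.

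The heart of the argument is to feed this rigid reductive representation into Simpson's theory \cite{carlos}: a rigid reductive representation of a K\"ahler group underlies a polarized complex variation of Hodge structure. The key consequence I would extract is the polarization, a flat nondegenerate Hermitian form $S$ on the underlying local system, so that $\rho(\Gamma)$ preserves $S$. The defining relation $\overline{\rho(\gamma)}^{\,T}S\,\rho(\gamma)=S$ exhibits $\overline{\rho}$ as isomorphic to the dual $\rho^{*}$; and since $\rho$ is defined over $\R$ (indeed over $\Z$) we have $\overline{\rho}=\rho$, whence $\rho\cong\rho^{*}$. Thus $\rho$ is self-dual.

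Next I would pass to Zariski closures. The image contains $\mathrm{SL}(n,\Z)$, which is Zariski-dense in $\mathrm{SL}(n,\mathbb C)$ by Borel density, and invariance of a nondegenerate bilinear form is a Zariski-closed condition; hence the tautological module of $\mathrm{SL}(n,\mathbb C)$ would itself be self-dual. For $n\ge 3$ this is false, since the standard module has highest weight $\varpi_{1}\ne\varpi_{n-1}$ and is not isomorphic to its dual. This contradiction disposes of both $\mathrm{Aut}(F_n)$ and $\mathrm{Out}(F_n)$ for every $n\ge 3$. I expect the main point requiring care to be the verification that Simpson's output is genuinely \emph{polarized}, so that the flat Hermitian form is available; this is precisely what forces self-duality, and it simultaneously explains why the argument must break in rank two, where the standard module of $\mathrm{SL}(2)$ is symplectically self-dual.

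Finally I would treat $n=2$ separately, where the Hodge-theoretic obstruction disappears. Here $\mathrm{Out}(F_2)\cong\mathrm{GL}(2,\Z)$ contains a nonabelian free group of finite index; since finite-index subgroups of K\"ahler groups are again K\"ahler while infinite K\"ahler groups are one-ended, and hence are neither free nor nontrivial free products, $\mathrm{Out}(F_2)$ is not K\"ahler. For $\mathrm{Aut}(F_2)$ I would use the extension $1\to F_2\to\mathrm{Aut}(F_2)\to\mathrm{GL}(2,\Z)\to 1$ and pass to the finite-index subgroup $\Gamma_0$ that is the preimage of a finite-index nonabelian free subgroup of $\mathrm{GL}(2,\Z)$. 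Then $\Gamma_0$ surjects onto a nonabelian free group with \emph{finitely generated} kernel $F_2$, contradicting the standard fact that an epimorphism from a K\"ahler group onto a nonabelian free group has infinitely generated kernel. These rank-two arguments are elementary beside the Hodge-theoretic core, but they are genuinely needed precisely because self-duality holds when $n=2$.
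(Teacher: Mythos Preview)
Your proposal is correct and follows essentially the same strategy as the paper: rigidity of the standard representation (Dyer--Formanek) combined with Simpson's theorem for $n\ge 3$, and elementary end/extension obstructions for $n=2$. The only difference is cosmetic---the paper phrases Simpson's output as ``the real Zariski closure of the image must be of Hodge type, but ${\rm{SL}}(n,\R)$ is not for $n\ge 3$,'' whereas you extract self-duality from the polarization; these are two expressions of the same obstruction.
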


 I thank Michael Handel, Mustafa Korkmaz,
Nicolas Monod and Carlos Simpson  for helpful conversations related to this article.
I thank Dawid Kielak and Ric Wade for reading the manuscript carefully, and
 I thank Tadeusz Januszkiewicz for his patient editing.

\section{Isometries of CAT$(0)$ spaces and centralizers}

Our basic reference for {\rm{CAT$(0)$}} spaces and their isometries is \cite{BH}. Let $X$
be a complete {\rm{CAT$(0)$}} space.
The {\em{translation length}} of an isometry $\g\in{\rm{Isom}}(X)$
is $\|\g\|:=\inf\{d(x,\g.x) \mid x\in X\}$, and ${\rm{Min}}(\g):=\{x\in X\mid d(x,\g.x)=\|\g\|\}$.
An isometry is termed {\em{semisimple}} if ${\rm{Min}}(\g)$ is non-empty.
Semisimple isometries divide into {\em{elliptics}} (when $\|\g\|=0$) and {\em{hyperbolics}} 
($\|\g\|>0$). Isometries for which ${\rm{Min}}(\g)=\emptyset$ are termed {\em{parabolic}} ({\em{neutral}}
or {\em{non-neutral}} according to whether $\|\g\|=0$).

The following basic result is proved on page 231 of \cite{BH}.

\begin{proposition}\label{p:split}
If $\g$ is hyperbolic, then ${\rm{Min}}(\g)$ splits isometrically as $Y\times\R$, where $\g$
acts trivially on the first factor and as $(t\mapsto t+\|\g\|)$ on the second factor. 
If $\alpha\in{\rm{Isom}}(X)$ commutes with $\g$, then it leaves ${\rm{Min}}(\g)$
invariant, preserves its splitting, and acts by translation on the second factor.
\end{proposition}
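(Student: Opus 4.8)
The plan is to reconstruct the standard argument (following \cite{BH}): identify ${\rm{Min}}(\g)$ with the union of the axes of $\g$, show that these axes are mutually parallel, and then feed them into the structure theory of parallel geodesic lines to extract the product splitting.

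First I would show that every point of ${\rm{Min}}(\g)$ lies on an axis, i.e.\ a geodesic line $c$ with $\g\,c(t)=c(t+\|\g\|)$. Given $x$ with $d(x,\g x)=\|\g\|$, form the path obtained by concatenating the segments $[\g^n x,\g^{n+1}x]$, $n\in\Z$. A corner at $x$ (angle $<\pi$ between $[\g^{-1}x,x]$ and $[x,\g x]$) would let the midpoints of these two segments, which $\g$ interchanges, be moved a distance strictly less than $\|\g\|$, contradicting that $\|\g\|$ is the infimal displacement. So the path is a local geodesic, hence a geodesic line in the CAT$(0)$ space, and by construction an axis. Conversely every point of an axis lies in ${\rm{Min}}(\g)$, so ${\rm{Min}}(\g)$ is exactly the union of axes.

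Next, any two axes are parallel: the function $t\mapsto d(c_1(t),c_2(t))$ is convex and invariant under $t\mapsto t+\|\g\|$, hence bounded and therefore constant. This is where the substantive input enters: by the Flat Strip Theorem the convex hull of two parallel lines is a flat strip, and the union of all lines parallel to a fixed axis is a closed convex set that splits isometrically as $Y\times\R$, with the fibres $\{y\}\times\R$ being exactly the parallels. Since ${\rm{Min}}(\g)$ is precisely this union, ${\rm{Min}}(\g)=Y\times\R$. Each axis is $\g$-invariant as a set, so $\g$ fixes every $y\in Y$ and acts on each fibre as an isometry of $\R$; the identity $d((y,t),\g(y,t))=\|\g\|$ together with preservation of orientation forces $\g$ to act as $t\mapsto t+\|\g\|$ on the second factor and trivially on $Y$.

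For the second assertion let $\alpha$ commute with $\g$. For $x\in{\rm{Min}}(\g)$ we have $d(\alpha x,\g\alpha x)=d(\alpha x,\alpha\g x)=d(x,\g x)=\|\g\|$, so $\alpha$ (and likewise $\alpha^{-1}$) preserves ${\rm{Min}}(\g)$; similarly $\g(\alpha c(t))=\alpha c(t+\|\g\|)$ shows $\alpha$ sends axes to axes, so it permutes the fibres. Write $\alpha(y,t)=(\bar\alpha(y),\epsilon_y t+b_y)$ with $\epsilon_y=\pm1$; commuting with $\g$ gives $\epsilon_y\|\g\|=\|\g\|$, so $\epsilon_y=1$. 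Imposing the isometry identity
\[ d_Y(y_1,y_2)^2+s^2=d_Y(\bar\alpha y_1,\bar\alpha y_2)^2+(s+b_{y_1}-b_{y_2})^2 \]
for all $s\in\R$ kills the term linear in $s$, forcing $b_{y_1}=b_{y_2}$; hence $b_y\equiv b$, the map $\bar\alpha$ is an isometry of $Y$, and $\alpha=\bar\alpha\times(t\mapsto t+b)$ preserves the splitting and translates the $\R$-factor. The hard part is the passage from ``all axes are parallel'' to the product splitting $Y\times\R$, namely the Flat Strip Theorem and the structure theorem for unions of parallel lines; everything else is bookkeeping with convexity of the metric and the commutation relation.
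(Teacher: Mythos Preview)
Your reconstruction is correct and is precisely the standard argument from \cite{BH}, II.6--II.7, which is all the paper invokes: the paper does not supply its own proof but simply cites page 231 of \cite{BH}. There is thus nothing to compare beyond noting that you have faithfully unpacked the cited reference.
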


We also need the following form of the Flat Torus Theorem (\cite{BH}, page 254).
A {\em $k$-flat} is an isometrically embedded copy of $k$-dimensional Euclidean space.

\begin{FTT} Whenever $\Z^r$ acts by semisimple isometries
on a complete {\rm{CAT$(0)$}} space $X$, it leaves invariant a $k$-flat  
$E\hookrightarrow X$ (some $k\le r$) on which it acts by translations.
If $\g\in\Z^r$ acts hyperbolically on $X$, then $\g|_E$ is a translation
of length $\|\g\|$.
\end{FTT}

\noindent{\em{Notation:}} Let $\g$ and $\d$ be commuting hyperbolic isometries. 
We write $\g\perp\d$ if at some (hence any) point $x\in{\rm{Min}}(\g)\cap{\rm{Min}}(\d)$
the axes of $\g$ and $\d$ through $x$ are orthogonal. Equivalently, the
action of $\d$ on the second factor of ${\rm{Min}}(\g) = Y\times\R$ is trivial.

We write $Z_G(S)$ for the centralizer of a set $S$ in a group $G$. We
write $H_1(G)$ for the abelianisation of $G$ and $[G,G]$ for the
commutator subgroup.

\begin{lemma}\label{l:null}
Suppose that $\G$ acts by isometries on a complete
CAT$(0)$ space $X$ and that $\gamma,\d\in\G$ are commuting  hyperbolic
isometries.
\begin{enumerate}
\item The natural map $\langle\gamma\rangle \to
H_1(Z_\G(\gamma))$ is injective.
\item If $\d\in [Z_\G(\g),Z_\G(\g)]$, then $\g\perp\d$.
\item If there exists $g\in Z_\G(\g)$ such that $g^{-1}\d^{-1} g = \d$
then $\g\perp \d$.
\end{enumerate}
\end{lemma}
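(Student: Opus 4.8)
The plan is to distill all the structural data into a single real-valued homomorphism and then read off each of the three conclusions as a formal consequence. Everything hinges on Proposition~\ref{p:split}: since $\g$ is hyperbolic, $\mathrm{Min}(\g)$ splits as $Y\times\R$ with $\g$ translating the $\R$-factor, and every isometry commuting with $\g$ preserves this splitting and acts on the second factor as a \emph{pure} translation $t\mapsto t+c$. My first step would be to record that the assignment $\alpha\mapsto c$ defines a homomorphism
\[
\tau\colon Z_\G(\g)\longrightarrow (\R,+),
\]
the point being that composing two translations of $\R$ yields a translation by the sum of the amounts, so $\tau$ genuinely lands in the abelian group $(\R,+)$ rather than merely in $\mathrm{Isom}(\R)$. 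By construction $\tau(\g)=\|\g\|>0$, and note $\d\in Z_\G(\g)$ since $\d$ commutes with $\g$.

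For (1), because the target $(\R,+)$ is abelian, $\tau$ factors through the abelianisation $H_1(Z_\G(\g))$. As $\tau(\g^n)=n\|\g\|\ne 0$ for every $n\ne 0$, the image of $\g$ in $H_1(Z_\G(\g))$ has infinite order, which is exactly the injectivity of $\langle\g\rangle\to H_1(Z_\G(\g))$.

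For (2) and (3) I would invoke the reformulation of $\perp$ stated just before the lemma: since $\d$ commutes with $\g$ and is hyperbolic, $\g\perp\d$ holds if and only if $\d$ acts trivially on the second factor of $\mathrm{Min}(\g)=Y\times\R$, i.e. if and only if $\tau(\d)=0$. In case (2), $\d\in[Z_\G(\g),Z_\G(\g)]$ lies in the kernel of any homomorphism to an abelian group, so $\tau(\d)=0$ and hence $\g\perp\d$. In case (3), applying the homomorphism $\tau$ to the relation $g^{-1}\d^{-1}g=\d$ with $g\in Z_\G(\g)$ gives $-\tau(\d)=\tau(\d)$, whence $2\tau(\d)=0$ and again $\tau(\d)=0$.

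The only genuine content — and the step I would treat most carefully — is the first one: checking that commuting isometries act on the $\R$-factor by honest translations, with no flip or reflection, so that $\tau$ is additive and $\R$-valued. This is precisely the force of the second sentence of Proposition~\ref{p:split}. Once it is in hand, the three statements follow purely formally from the universal property of abelianisation (for (1) and (2)) and from the self-inverting relation in (3).
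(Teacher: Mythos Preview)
Your proof is correct and follows essentially the same route as the paper's: both arguments use Proposition~\ref{p:split} to obtain a homomorphism from $Z_\G(\g)$ to the torsion-free abelian group of translations of $\R$ in which $\g$ has nontrivial image, and then read off (1) and (2) formally. The only cosmetic difference is in (3): the paper observes that $\d^2=g^{-1}\d^{-1}g\,\d$ is a commutator in $Z_\G(\g)$ and invokes (2) (noting that $\d$ and $\d^2$ share axes), whereas you apply $\tau$ directly to the relation---but this is the same computation.
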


\begin{proof} The action of $Z(\gamma)$ on the second factor of 
${\rm{Min}}(\gamma) = Y\times\R\subset X$ is by translations. The
group of all such translations is torsion-free and abelian, and the image of
$\gamma$ is non-trivial. This proves (i) and (ii), and (iii) is a special case of
(ii) since $\d^2 = g^{-1}\d^{-1} g \d\in [Z_\G(\g),Z_\G(\g)]$ and any axis for
$\d$ is an axis for $\d^2$.
\end{proof} 

\subsection{Induced actions} \label{s:induce}

Let $G$ be a group and let $H\subset G$ be a subgroup of index $d$.
Just as one induces an $n$-dimensional linear representation of $H$ to obtain
an $nd$-dimensional representation of $G$, so one can induce an action
of $H$ by isometries of a metric space $X$ to obtain an action of $G$ by isometries
on $X^d$ (with the product metric). One way to view this induction (following
\cite{tomD} p.35) is to identify $X^d$ with the space of $H$-equivariant maps
$f:G\to X$ with the action $(g.f)(\gamma) = f(\gamma g^{-1})$. 

The following lemma is covered in \cite{BH}, p.231--232.

\begin{lemma}\label{l:induced}
 Suppose $H$ has index $d$ in $G$ and that $H$ acts by isometries
on a complete {\rm{CAT$(0)$}} space $X$. If $g^p\in H$ acts as a hyperbolic isometry, where
$p$ is a non-zero integer, then $g$ acts hyperbolically in the induced action of $G$
on $X^d$.
\end{lemma}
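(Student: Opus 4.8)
The plan is to realise the induced action concretely and then reduce the whole question to a single explicit computation on one $\langle g\rangle$-orbit of cosets. Model $X^d$ as the space of $H$-equivariant maps $f\colon G\to X$, with $(g.f)(\gamma)=f(\gamma g^{-1})$ and the $\ell^2$ product metric. Since $g^p$ is hyperbolic it has infinite order, so $\langle g\rangle\cong\Z$ acts on the finite set $H\backslash G$ by $Ht\mapsto Htg^{-1}$; decompose this set into $\langle g\rangle$-orbits $O_1,\dots,O_r$ of lengths $\ell_1,\dots,\ell_r$ (so $\sum_s\ell_s=d$). Choosing in each orbit the representatives $t_{s,0},\,t_{s,0}g,\dots,t_{s,0}g^{\ell_s-1}$, one checks that $g$ preserves the corresponding factor $X^{O_s}\cong X^{\ell_s}$ of $X^d=\prod_s X^{O_s}$. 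Because powers of $g$ commute, each representative $t_{s,j}=t_{s,0}g^{j}$ conjugates $g^{-\ell_s}$ to the same element $\eta_s:=t_{s,0}\,g^{-\ell_s}\,t_{s,0}^{-1}\in H$; hence $g$ acts on $X^{O_s}$ as a cyclic shift carrying a single twist $\eta_s$, and $g^{\ell_s}$ acts there as the diagonal isometry with every coordinate equal to $\eta_s$.

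First I would carry out the one-orbit computation. Writing $X^{O_s}=X^{\ell_s}$ with $g$ acting by $(g.x)_j=x_{j-1}$ for $j\ge 1$ and $(g.x)_0=\eta_s.x_{\ell_s-1}$, I claim $g|_{X^{O_s}}$ is hyperbolic (resp. semisimple) if and only if $\eta_s$ is, with $\|g|_{X^{O_s}}\|=\|\eta_s\|/\sqrt{\ell_s}$. When $\eta_s$ is hyperbolic with axis $L$, an explicit axis for $g|_{X^{O_s}}$ is obtained by placing the coordinates at $\ell_s$ equally spaced points of $L$: parametrise $L$ at unit speed by $\gamma(t)$ with $\eta_s\gamma(t)=\gamma(t+\tau)$, where $\tau:=\|\eta_s\|$, and set $x_j=\gamma(-j\tau/\ell_s)$. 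Each of the $\ell_s$ steps then has length $\tau/\ell_s$, so $d(x,g.x)=\tau/\sqrt{\ell_s}$, while $g^{\ell_s}$ (diagonal $\eta_s$) moves $x$ a distance $\sqrt{\ell_s}\,\tau=\ell_s\cdot d(x,g.x)$; equality in the triangle inequality $d(x,g^{\ell_s}x)\le \ell_s\,d(x,g.x)$ forces the $g$-orbit of $x$ to be collinear, so $x$ lies on an axis. In general ${\rm{Min}}(g)=\prod_s {\rm{Min}}(g|_{X^{O_s}})$ and $\|g\|^2=\sum_s \|\eta_s\|^2/\ell_s$.

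It remains to feed in the hypothesis. For the orbit containing the trivial coset I may take $t_{s,0}=1$, so $\eta_s=g^{-\ell_s}$ with $\ell_s$ the least positive power of $g$ lying in $H$; then $\ell_s\mid p$ and $(g^{-\ell_s})^{p/\ell_s}=(g^{p})^{-1}$ is hyperbolic, so $g^{-\ell_s}$ is a root of a hyperbolic isometry and is therefore itself hyperbolic — this is the standard fact that $\beta^{m}$ hyperbolic forces $\beta$ hyperbolic, which follows from Proposition \ref{p:split} together with the observation that a finite-order isometry of a complete CAT$(0)$ space is elliptic. This already gives $\|g\|>0$. The genuine obstacle is semisimplicity: by the formula for ${\rm{Min}}(g)$ above, $g$ is semisimple exactly when every block $\eta_s\in H$ is semisimple. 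Each $\eta_s$ is a $G$-conjugate of a power of $g$, but the conjugating element $t_{s,0}$ need not lie in $H$, so semisimplicity of $\eta_s$ is not a formal consequence of $g^{p}$ being hyperbolic; it is precisely here that one uses that $H$ acts by semisimple isometries — the standing hypothesis under which the lemma is applied — whence each $\eta_s\in H$ is semisimple and ${\rm{Min}}(g)\ne\emptyset$. Combined with $\|g\|>0$ this shows $g$ is hyperbolic. I expect the verification that the twisted cyclic shift has the asserted axis, and the identification of the blocks $\eta_s$ as semisimple elements of $H$, to be the two points that need real care.
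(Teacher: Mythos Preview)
The paper does not give its own proof of this lemma; it simply refers the reader to \cite{BH}, p.~231--232, where the basic ingredients (the structure of ${\rm Min}(\gamma)$ and, in particular, the fact that a root of a hyperbolic isometry of a complete {\rm CAT}$(0)$ space is again hyperbolic) can be found. Your argument is therefore far more detailed than anything the paper offers: the decomposition of $H\backslash G$ into $\langle g\rangle$-orbits, the identification of each block of $g$ as a twisted cyclic shift with twist $\eta_s=t_{s,0}\,g^{-\ell_s}\,t_{s,0}^{-1}$, the computation $\|g\|^2=\sum_s\|\eta_s\|^2/\ell_s$, and the explicit construction of an axis in $X^{\ell_s}$ from an axis for $\eta_s$ are all correct.

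Your diagnosis of the delicate point is also correct, and in fact sharper than the paper: as literally stated the lemma is false. A concrete counterexample: let $G=F_2=\langle a,b\rangle$, let $H$ be the kernel of the map to $\Z/2$ sending $a\mapsto 1$, $b\mapsto 0$ (so $H$ is free on $a^2,\,b,\,aba^{-1}$), and let $H$ act on the hyperbolic plane sending $b$ to a hyperbolic isometry and $aba^{-1}$ to a parabolic one. Taking $g=b$ and $p=1$, the two $\langle g\rangle$-orbits on $H\backslash G$ are singletons with twists $b^{-1}$ (hyperbolic) and $ab^{-1}a^{-1}$ (parabolic), so in the induced action on $X^2$ the element $g$ has positive translation length but empty ${\rm Min}$-set --- it is non-neutral parabolic, not hyperbolic. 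Your remedy, namely importing the standing hypothesis that $H$ act by \emph{semisimple} isometries, is exactly what is needed; that hypothesis is in force in every use the paper makes of this lemma (see the discussion in the introduction of extending Theorem~\ref{t:ell} to finite-index subgroups). So there is no defect in your argument: you have supplied both a complete proof and a necessary correction to the statement.
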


One can also express induction in 
its group-theoretic form, the {\em{wreath product}}.
Recall that $A\wr B$ is the semidirect product $W=B\ltimes\oplus_{b\in B} A_b$, where the
$A_b$ are isomorphic copies of $A$ permuted by left translation. $\oplus_{b\in B} A_b$
is called the base of the wreath product.
If $B$ is finite
and $A$ acts on $X$ by isometries, then there is an obvious action of $W$ by isometries
on $X^B$, with $B$ permuting the factors and $A_b$ acting as $A$ in the $b$-coordinate and
trivially in the others. The induction described above is then just a manifestation of the following
standard lemma. To avoid complications we assume that $H$ is normal in $G$.

\begin{lemma}\label{l:wr}
 If $H$ is normal in $G$ and $\phi : H\to Q$ is a homomorphism,
then there is a homomorphism $\Phi: G\to Q\wr (G/H)$ so that $\Phi(H)
\subset \oplus_{x\in G/H} Q_x$ and 
for all $h\in H$ the coordinate of $\Phi(h)$
in $Q_1$ is $\phi(h)$.
\end{lemma}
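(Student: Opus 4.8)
The plan is to construct $\Phi$ explicitly from a set of coset representatives, following the classical Kaloujnine--Krasner embedding. Since $H$ is normal, $G/H$ is a group and the wreath product $Q\wr(G/H)=(G/H)\ltimes\bigoplus_{x\in G/H}Q_x$ is defined; this normality is precisely what makes the target meaningful. In the cases of interest $[G:H]$ is finite, so the base is a genuine direct sum and finiteness of support is never an issue. First I would fix a transversal $\{t_x : x\in G/H\}$ for the cosets, chosen so that the representative of the identity coset is $t_1=1$; for $g\in G$ I write $\bar g\in G/H$ for its image.

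The key device is a Schreier-type cocycle that records how $g$ permutes the cosets and which element of $H$ it leaves behind. For each $g\in G$ and each coset $x$, set
\[
h(g,x):=t_x^{-1}\,g\,t_{\bar g^{-1}x}.
\]
This lies in $H$ because its image in $G/H$ is $x^{-1}\,\bar g\,\bar g^{-1}x=1$. Now define $\Phi(g):=(\bar g,\, f_g)$, where $f_g\colon G/H\to Q$ is the function $f_g(x):=\phi\bigl(h(g,x)\bigr)$.

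To see that $\Phi$ is a homomorphism I would compute $\Phi(g)\Phi(g')$ using the semidirect-product rule $(\bar g,f_g)(\overline{g'},f_{g'})=(\overline{gg'},\,f_g\cdot\sigma_{\bar g}(f_{g'}))$, where the left-translation twisting is $\sigma_{\bar g}(f)(x)=f(\bar g^{-1}x)$. The $x$-coordinate of the base is then $\phi\bigl(h(g,x)\bigr)\,\phi\bigl(h(g',\bar g^{-1}x)\bigr)$, and since $t_{\bar g^{-1}x}$ cancels between the two cocycle terms this telescopes to $\phi\bigl(t_x^{-1}gg'\,t_{\overline{gg'}^{-1}x}\bigr)=f_{gg'}(x)$; as the top components multiply to $\overline{gg'}$, we obtain $\Phi(g)\Phi(g')=\Phi(gg')$. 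Finally I would verify the two asserted properties directly. For $h\in H$ one has $\bar h=1$, so $\Phi(h)=(1,f_h)$ lies in the base $\bigoplus_x Q_x$; and because $t_1=1$ the $Q_1$-coordinate is $f_h(1)=\phi(t_1^{-1}h\,t_1)=\phi(h)$.

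The construction harbours no genuine obstacle; the only delicate point is bookkeeping. One must align the direction of the cocycle $h(g,x)$ with the left-translation convention for the base so that the intermediate representatives cancel — reverse either choice and the product fails to telescope. (Should $[G:H]$ be infinite, the same $\Phi$ lands in the unrestricted wreath product, with the direct-product base in place of the direct sum, but this case does not arise in our applications.)
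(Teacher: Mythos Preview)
Your proof is correct: the Kaloujnine--Krasner construction via a transversal and Schreier cocycle is exactly the standard argument, and your verification of the cocycle identity and the two asserted properties is clean. The paper does not actually give a proof of this lemma---it is stated as a ``standard lemma'' and left to the reader---so there is nothing further to compare; your write-up supplies precisely the routine details the author omitted.
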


\section{Automorphisms that cannot act hyperbolically}

{\em{Conventions:}} Throughout this article, $F_n$ denotes a free group of 
rank $n$.
We work with the left action of ${\rm{Aut}}(F_n)$ on $F_n$ and the commutator convention
$[\alpha,\beta]=\alpha\beta\alpha^{-1}\beta^{-1}$. (So $[\a,\b]$ is the automorphism ``$\beta^{-1}$, followed by
$\alpha^{-1},\dots$")
We write ${\rm{ad}}_\gamma$ to denote the inner automorphism $x\mapsto 
\gamma x\gamma^{-1}$ (so ${\rm{ad}}_{\gamma\delta}={\rm{ad}}_{\gamma}\circ{\rm{ad}}_{\delta}$). Note that 
$\phi\,{\rm{ad}}_\gamma\,\phi^{-1} = {\rm{ad}}_{\phi(\gamma)}$.

\smallskip

The following lemma is a variation on the argument that Gersten \cite{gersten}
used to show that ${\rm{Aut}}(F_n)$ cannot act properly and cocompactly on a 
{\rm{CAT}}$(0)$  space if $n\ge 3$ (cf.~\cite{BH} p.253).

\begin{lemma}\label{l:pq}
Let $G_{p,q}=\langle \a,\b,\c,t\mid [t,\a],\, t\b t^{-1}=\b\a^p,\, t\c t^{-1}=\c\a^q\rangle $,
where $p$ and $q$ are non-zero integers. If $p\neq q$, then 
$\a$ fixes a point whenever $G_{p,q}$ acts by semisimple isometries on a 
complete {\rm{CAT}}$(0)$  space.
\end{lemma}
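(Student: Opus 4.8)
The plan is to argue by contradiction: assuming $\a$ acts as a hyperbolic isometry, I will extract from the two relations $t\b t^{-1}=\b\a^p$ and $t\c t^{-1}=\c\a^q$ a pair of equations on translation lengths that force $p=q$.

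First I would rewrite the relations. Since $t$ and $\a$ commute, $t\b t^{-1}=\b\a^p$ yields $\b^{-1}t\b = t\a^p$, and likewise $\c^{-1}t\c=t\a^q$. Thus $t\a^p$ and $t\a^q$ are conjugate to $t$, and as translation length is a conjugacy invariant,
\[
\|t\a^p\| = \|t\a^q\| = \|t\|.
\]
Next I analyse the geometry on ${\rm{Min}}(\a)$. As $\a$ is hyperbolic, Proposition \ref{p:split} splits ${\rm{Min}}(\a)=Y\times\R$ with $\a$ acting as translation by $\ell:=\|\a\|>0$ on the $\R$-factor and trivially on $Y$. Since $t$ commutes with $\a$, the same proposition shows $t$ preserves this splitting and translates the $\R$-factor; write $t|_{{\rm{Min}}(\a)}=(\phi,\tau_s)$, where $\phi\in{\rm{Isom}}(Y)$ and $\tau_s$ is translation by $s\in\R$. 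Then for every integer $k$ the element $t\a^k$ preserves ${\rm{Min}}(\a)$ and acts there as $(\phi,\tau_{s+k\ell})$, whose translation length on the product equals $\sqrt{\|\phi\|^2+(s+k\ell)^2}$.

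The crux is to identify this product translation length with $\|t\a^k\|$ computed in $X$. Each $t\a^k$ is semisimple by hypothesis and commutes with $\a$, so it leaves the closed convex set ${\rm{Min}}(\a)$ invariant; applying the $(t\a^k)$-equivariant nearest-point projection $X\to{\rm{Min}}(\a)$ to a point of ${\rm{Min}}(t\a^k)$ shows ${\rm{Min}}(t\a^k)\cap{\rm{Min}}(\a)\ne\emptyset$, so the infimum defining $\|t\a^k\|$ is attained on ${\rm{Min}}(\a)$. (Alternatively one may apply the Flat Torus Theorem to the abelian group $\langle\a,t\rangle$ to obtain an invariant flat on which each $t\a^k$ translates by $v_t+kv_\a$ with $|v_\a|=\ell$.) Either way,
\[
\|t\a^k\|^2 = \|\phi\|^2 + (s+k\ell)^2 .
\]

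Finally I substitute. From $\|t\a^p\|=\|t\|=\|t\a^0\|$ one gets $(s+p\ell)^2=s^2$, hence $2s+p\ell=0$ (using $p\ne 0$, $\ell\ne 0$); the identical computation with $q$ gives $2s+q\ell=0$. Subtracting, $(p-q)\ell=0$, contradicting $p\ne q$ and $\ell>0$. Therefore $\a$ cannot be hyperbolic; being semisimple, it is elliptic and so fixes a point. The step I expect to be the genuine obstacle is the identification $\|t\a^k\|^2=\|\phi\|^2+(s+k\ell)^2$: everything hinges on the translation length of $t\a^k$ being realised on ${\rm{Min}}(\a)$ (equivalently, on $\phi$ being semisimple on $Y$), and this is precisely where the standing hypothesis that the whole group acts by semisimple isometries enters.
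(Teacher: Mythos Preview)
Your proof is correct and follows the same route as the paper's: both extract $\|t\|=\|t\a^p\|=\|t\a^q\|$ from the conjugacy relations and then exploit the flat/Min-set structure for the commuting pair $\langle\a,t\rangle$ to force $\a$ to be elliptic. The only differences are cosmetic --- the paper case-splits on whether $t$ is elliptic and phrases the hyperbolic case geometrically (three collinear points equidistant from a centre must coincide), whereas you assume $\a$ hyperbolic directly and finish with the Pythagorean computation $(s+p\ell)^2=s^2=(s+q\ell)^2$.
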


\begin{proof} Suppose $G_{p,q}$ acts 
semisimply on a complete {\rm{CAT}}$(0)$  space
$X$. 
Note that $\b$ conjugates $t$ to $\a^pt$ and $\c$ conjugates $t$ to
$\a^qt$, so $\|t\|=\|\a^pt\|=\|\a^qt\|$.
 Thus if $t$ is elliptic then $\a^pt$ is elliptic, hence
$\a^p=(\a^pt)t^{-1}$ as a product of commuting elliptics is elliptic,
and therefore $\a$ is elliptic. 

If $t$ is hyperbolic then  by the Flat Torus Theorem, $A=\langle \a,t\rangle $ would act by translations
on a geodesic line or Euclidean plane $E$ in ${\rm{Min}}(t)\cap{\rm{Min}}(\a)$ with $t,\, t\a^p,\, t\a^q$ all acting as translations
of length $\|t\|$. 
But the images of any point $e\in E$
under these three translations are colinear. This is incompatible with the
fact that they are equidistant from $e$ unless we are in the degenerate situation where the points coincide, i.e. $\a$ is acting trivially on $E$ and hence is elliptic.
\end{proof}  

For the next lemma it is convenient to work with a basis $\{a_0,\dots,a_n\}$ for $F_{n+1}$,
setting $F_n=\langle a_1,\dots,a_n\rangle $, and for each $w\in F_n$
defining $R_w\in {\rm{Aut}}(F_{n+1})$ by $R_w(a_0)=a_0w$ and
$R_w(a_i)=a_i$ for $i>0$. 

$R_{a_i}$ is a Nielsen transformation, so the
following proposition implies Theorem \ref{t:ell}.

\begin{proposition} \label{p:more}
If $w\in F_n$ lies 
in a free factor of rank $n-2$, then $R_w$ fixes a point whenever
${\rm{Aut}}(F_{n+1})$ acts by semisimple isometries on a complete {\rm{CAT}}$(0)$  space.
\end{proposition}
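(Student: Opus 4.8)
The plan is to deduce Proposition \ref{p:more} from Lemma \ref{l:pq} by locating a copy of the group $G_{p,q}$ inside ${\rm{Aut}}(F_{n+1})$ in which $R_w$ plays the role of $\a$, with $p\ne q$. The hypothesis that $w$ lies in a free factor of rank $n-2$ is exactly what should give us enough room to build the auxiliary generators $\b,\c,t$. So first I would fix a basis $\{a_1,\dots,a_n\}$ adapted to the free factor containing $w$: choose a free decomposition $F_n = H * K$ with $w\in H$ and $\operatorname{rank}(H)=n-2$, so that $K$ has rank $2$, say $K=\langle u,v\rangle$ with $u,v$ among (or freely expressible in terms of) our basis. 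The point is that $R_w$ only modifies $a_0$ by right-multiplication by $w$, and $w$ involves none of the letters in the rank-$2$ complementary factor.

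Next I would write down candidates for $t,\b,\c$. The defining relations of $G_{p,q}$ are $[t,\a]=1$, $\ t\b t^{-1}=\b\a^p$, and $t\c t^{-1}=\c\a^q$. Reading $\a = R_w$, I want an automorphism $t$ that commutes with $R_w$, together with two automorphisms $\b,\c$ whose $t$-conjugates differ from themselves precisely by $R_w^p$ and $R_w^q$ respectively, for two distinct nonzero integers $p\ne q$. The natural choice for $t$ is an automorphism supported on $a_0$ and the rank-$2$ factor $K$ — for instance one sending $a_0\mapsto a_0$ and permuting or twisting $u,v$ — chosen so that it manifestly commutes with $R_w$ (since $R_w$ and $t$ touch $a_0$ compatibly and $t$ fixes $w\in H$). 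For $\b,\c$ I would use transformations of the $R$-type, $R_{w'}$ for suitable words $w'$ built from $u,v$, arranged so that the conjugation by $t$ shifts the exponent of $w$ by the prescribed amounts; using the formula $\phi\,{\rm{ad}}_\gamma\,\phi^{-1}={\rm{ad}}_{\phi(\gamma)}$ and the explicit action of $R_{w'}$ on $a_0$, one computes $t R_{w'} t^{-1}$ as another $R$-map and checks it equals $R_{w'}R_w^{\,p}$ on the nose.

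The verification step is then to confirm that these chosen generators satisfy the three relations of $G_{p,q}$, which amounts to a direct calculation evaluating each side on the basis elements $a_0,\dots,a_n$ — only the action on $a_0$ is nontrivial, since every generator in sight fixes $a_1,\dots,a_n$, so these checks should be short. Granting the relations, there is a homomorphism $G_{p,q}\to{\rm{Aut}}(F_{n+1})$ carrying $\a\mapsto R_w$; restricting the given semisimple action of ${\rm{Aut}}(F_{n+1})$ along this homomorphism gives a semisimple action of $G_{p,q}$, and Lemma \ref{l:pq} (with $p\ne q$) forces $\a=R_w$ to fix a point.

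I expect the main obstacle to be the combinatorial bookkeeping of the previous paragraph: namely exhibiting concrete $t,\b,\c$ whose $t$-conjugates produce two \emph{different} exponents $p$ and $q$ of $R_w$ while keeping $[t,R_w]=1$. This is where the rank-$2$ complementary factor is essential — one degree of freedom is needed to make $t$ commute with $R_w$, and a second to split the two conjugation relations so that $p\ne q$. If the factor had rank only $1$ one could not simultaneously arrange both, which is presumably why the hypothesis is ``rank $n-2$'' rather than ``rank $n-1$''.
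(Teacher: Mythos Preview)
Your overall strategy matches the paper's: exhibit a homomorphism $G_{p,q}\to{\rm{Aut}}(F_{n+1})$ with $\a\mapsto R_w$ and apply Lemma~\ref{l:pq}. You also correctly guess that $\b$ and $\c$ should be $R_u$ and $R_v$, where $u,v$ generate the complementary rank-$2$ free factor $K$. The gap is in your choice of $t$. If $t$ merely ``permutes or twists $u,v$'' within $K$ while fixing $a_0$ and $H$, then for any $w'\in K$ one has $tR_{w'}t^{-1}=R_{t(w')}$ with $t(w')\in K$; this can never equal $R_{w'}R_w^{\,p}=R_{w'w^p}$ because $w\in H$ and $H\cap K=\{1\}$. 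So no $t$ of the shape you propose satisfies $t\b t^{-1}=\b\a^p$ with $p\neq 0$. (Relatedly, your later assertion that ``every generator in sight fixes $a_1,\dots,a_n$'' is incompatible with any $t$ that works: the formula $\phi\,{\rm{ad}}_\gamma\,\phi^{-1}={\rm{ad}}_{\phi(\gamma)}$ you cite concerns inner automorphisms, not the maps $R_{w'}$, and does not help here.)

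The fix --- which is the paper's choice and the essence of Gersten's trick --- is to let $t$ move $u,v$ \emph{using} $w$: define $T\in{\rm{Aut}}(F_{n+1})$ by $T(u)=uw^p$, $T(v)=vw^q$, and $T(a_i)=a_i$ otherwise (in particular $T(a_0)=a_0$). Then $T$ fixes $w\in H$, so $[T,R_w]=1$; and on $a_0$ one computes $TR_uT^{-1}(a_0)=T(a_0u)=a_0\,uw^p=(R_uR_w^{\,p})(a_0)$, with the analogous calculation for $v$ producing the exponent $q$. The two distinct integers $p\neq q$ are inserted here by hand. Your intuition about why the complementary factor must have rank $2$ is correct, but for a slightly different reason than you state: one needs two independent letters $u,v$ on which $T$ can act with \emph{different} powers of $w$, rather than one degree of freedom for commuting and another for the conjugation relations.
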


\begin{proof} Without loss of generality we may assume that $w
\in\langle a_1,\dots,a_{n-2}\rangle $. Let $p\neq q$ be non-zero
integers and let $T\in{\rm{Aut}}(F_{n+1})$ be the automorphism
defined by $T(a_{n-1})=a_{n-1}w^p,\,T(a_n)=a_nw^q$ and
$T(a_i)=a_i$ for $i=0,\dots,n-2$. An elementary calculation shows
that the assignment $[\a\mapsto R_w,\, \b\mapsto R_{a_{n-1}},\,
\c\mapsto R_{a_n},\, t\mapsto T]$ respects the defining 
relations of the group $G_{p,q}$ of Lemma \ref{l:pq}
and hence defines a homomorphism
$G_{p,q}\to {\rm{Aut}}(F_{n+1})$. Thus any action of ${\rm{Aut}}(F_{n+1})$ by
semisimple isometries on a complete {\rm{CAT}}$(0)$  space gives rise to an
action of $G_{p,q}$, and Lemma \ref{l:pq} tells us that
$R_w$, the image of $\alpha\in G_{p,q}$ must act elliptically. 
\end{proof}

Our interest in the following lemma lies with the case $n=3$.

\begin{lemma}\label{l:inner} Let $n\ge 3$.
 Whenever ${\rm{Aut}}(F_n)$ acts by semisimple isometries
on a complete {\rm{CAT}}$(0)$  space, the inner automorphisms corresponding to
primitive elements are elliptic.
\end{lemma}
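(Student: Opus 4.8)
The plan is to run the argument of Lemma \ref{l:pq} with $\a$ playing the role of the inner automorphism: I shall build a homomorphism $G_{p,q}\to{\rm{Aut}}(F_n)$, for suitable $p\neq q$, that sends $\a$ to the relevant inner automorphism, and then simply quote Lemma \ref{l:pq}.

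First I would reduce to a single element. Since ${\rm{Aut}}(F_n)$ acts transitively on primitive elements, every primitive $w$ equals $\phi(a_1)$ for some $\phi\in{\rm{Aut}}(F_n)$, whence ${\rm{ad}}_w={\rm{ad}}_{\phi(a_1)}=\phi\,{\rm{ad}}_{a_1}\,\phi^{-1}$ is conjugate to ${\rm{ad}}_{a_1}$ in ${\rm{Aut}}(F_n)$, and therefore acts as an isometry conjugate to the one induced by ${\rm{ad}}_{a_1}$. As conjugate semisimple isometries have the same type, it suffices to prove that ${\rm{ad}}_{a_1}$ is elliptic.

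The crux is to realise $G_{p,q}$. Using $n\ge 3$, I set $\a\mapsto{\rm{ad}}_{a_1}$, $\b\mapsto{\rm{ad}}_{a_2}$, $\c\mapsto{\rm{ad}}_{a_3}$ and $t\mapsto T$, where $T=\r_{21}^{\,p}\r_{31}^{\,q}$ is the automorphism fixing $a_1$ (and every $a_i$ with $i>3$) and sending $a_2\mapsto a_2a_1^{p}$, $a_3\mapsto a_3a_1^{q}$. Because $T(a_1)=a_1$ we get $[T,{\rm{ad}}_{a_1}]=1$; and using ${\rm{ad}}_{a_1}^{\,p}={\rm{ad}}_{a_1^{p}}$ together with the identity $\phi\,{\rm{ad}}_\gamma\,\phi^{-1}={\rm{ad}}_{\phi(\gamma)}$ we obtain $T\,{\rm{ad}}_{a_2}\,T^{-1}={\rm{ad}}_{T(a_2)}={\rm{ad}}_{a_2a_1^{p}}={\rm{ad}}_{a_2}\,{\rm{ad}}_{a_1}^{\,p}$, and likewise $T\,{\rm{ad}}_{a_3}\,T^{-1}={\rm{ad}}_{a_3}\,{\rm{ad}}_{a_1}^{\,q}$. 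These are precisely the three defining relations of $G_{p,q}$, so the assignment extends to a homomorphism; taking $p=1,\ q=2$ makes $p\neq q$ nonzero as required.

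Finally, pulling the given semisimple action of ${\rm{Aut}}(F_n)$ back along this homomorphism yields a semisimple action of $G_{p,q}$, and Lemma \ref{l:pq} forces the image of $\a$, namely ${\rm{ad}}_{a_1}$, to fix a point. The only creative step is spotting that $\b,\c$ should be the inner automorphisms ${\rm{ad}}_{a_2},{\rm{ad}}_{a_3}$ and $t$ the commuting pair of transvections above; after that the verification is purely formal, driven by ${\rm{ad}}_{\gamma\delta}={\rm{ad}}_\gamma{\rm{ad}}_\delta$. There is correspondingly no analytic obstacle, and the hypothesis $n\ge 3$ enters only through the need for a third basis element to carry $\c$.
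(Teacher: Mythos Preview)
Your proof is correct and its core is exactly the paper's argument for $n=3$: the same homomorphism $G_{p,q}\to{\rm Aut}(F_n)$ sending $\a,\b,\c$ to ${\rm ad}_{a_1},{\rm ad}_{a_2},{\rm ad}_{a_3}$ and $t$ to $\r_{21}^{\,p}\r_{31}^{\,q}$. The only organisational difference is that the paper treats $n\ge 4$ separately, first invoking Proposition~\ref{p:more} to get the Nielsen moves elliptic and then writing ${\rm ad}_{a_1}=(\l_{21}^{-1}\r_{21})\cdots(\l_{n1}^{-1}\r_{n1})$ as a product of commuting elliptics; your uniform use of the $G_{p,q}$ map for all $n\ge 3$ is tidier and makes the case split unnecessary.
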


\begin{proof} If $n\ge 4$, then the preceding proposition implies that the right Nielsen transformations
$\r_{ij}$ are elliptic, hence so are their conjugates $\l_{ij}$. The inner automorphism
corresponding to a primitive element,
say $a_1$ from the basis $\{a_1,\dots,a_n\}$, is a composition of commuting Nielsen transformations:
${\rm{ad}}_{a_1} = (\l_{21}^{-1}\r_{21})\dots (\l_{n1}^{-1}\r_{n1})$, and a product of commuting elliptic
isometries is elliptic.

When $n=3$ the Nielsen transformations need not be elliptic, so we require a different argument.
Let $\{a,b,c\}$ be a basis for $F_3$
and let $\tau\in{\rm{Aut}}(F_3)$ be the automorphism
$[a\mapsto a,\, b\mapsto ba^p,\, c\mapsto ca^q]$.
The assignment $[\a\mapsto {\rm{ad}}_a,\, \b\mapsto{\rm{ad}}_b,\, 
\c\mapsto{\rm{ad}}_c,\, t\mapsto \tau]$ defines a homomorphism
$G_{p,q}\to{\rm{Aut}}(F_3)$ sending $\a$ to ${\rm{ad}}_a$, so Lemma
\ref{l:pq} implies that ${\rm{ad}}_a$ must be elliptic.
\end{proof}

\begin{remark} The maps $G_{p,q}\to{\rm{Aut}}(F_{n+1})$ 
and $G_{p,q}\to{\rm{Aut}}(F_{3})$ used in the preceding proofs are injective, but since we do not need this fact we omit the proof.
\end{remark}

\section{The rhombic dodecahedron}\label{s:ddh}
The {\em{rhombic dodecahedron}} is one of the Catalan solids; it is
dual to the Archimedean solid known as the cuboctahedron. It has
fourteen vertices and twelve faces. The faces are rhombi 
whose diagonals have lengths  in the ratio $1:\sqrt 2$. 

Consider the standard tiling of $\R^3$ by unit cubes and fix a 
vertex $u$. The rhombic dodecahedron can be constructed as the convex hull
of the endpoints of the six edges incident at $u$ together
with the centres of the eight cubes incident at $u$. In other words,
 it is the Voronoi cell for the face-centred cubic lattice (and therefore occurs
 naturally in many crystal formations).
 
 It follows from this description that the rhombic dodecahedron 
is a Dirichlet domain for the free abelian group of rank 3
that translates $\R^3$ by integer vectors $(a,b,c)$ with
$a+b+c$ even.  This is the group generated by the edge-vectors 
of an octahedron dual to a cube with edge-length 2.
For our purposes, it is convenient to rephrase this as follows.

\begin{lemma}\label{l:octo} Let $A\subset\R^3$ be the $\Z$-module
generated by unit vectors $u_1,u_2,v_1,v_2$. If
\begin{enumerate}
\item $u_1+u_2=v_1+v_2$,
\item $u_1\perp u_2$ and $v_1\perp v_2$, and 
\item $(u_1-u_2)\perp (v_1-v_2)$,
\end{enumerate}
then $A$ is discrete and
 $\{ x\mid \|x\| \le \|x-a\|\text{ for all }a\in A\}$ is a rhombic dodecahedron.
\end{lemma}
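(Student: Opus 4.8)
The plan is to diagonalise the situation by passing to the orthogonal frame that the four generators naturally determine. Set $s=u_1+u_2=v_1+v_2$ (the two expressions agree by hypothesis (1)), and set $d_1=u_1-u_2$ and $d_2=v_1-v_2$. Expanding the inner product and using that the $u_i,v_i$ are unit vectors gives $s\cdot d_1=(u_1+u_2)\cdot(u_1-u_2)=\|u_1\|^2-\|u_2\|^2=0$, and likewise $s\cdot d_2=0$; hypothesis (3) gives $d_1\cdot d_2=0$. Hypothesis (2) gives $\|s\|^2=\|u_1\|^2+2\,u_1\cdot u_2+\|u_2\|^2=2$ and similarly $\|d_1\|^2=\|d_2\|^2=2$. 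Thus $\{s,d_1,d_2\}$ is an orthogonal basis of $\R^3$ whose three vectors have equal length $\sqrt2$, so $\{s/\sqrt2,\,d_1/\sqrt2,\,d_2/\sqrt2\}$ is orthonormal. This is the one real observation in the proof.

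Next I would read off the generators in this frame. Since $2u_1=s+d_1$, $2u_2=s-d_1$, $2v_1=s+d_2$ and $2v_2=s-d_2$, the similarity $\Phi\colon\R^3\to\R^3$ that sends the orthonormal frame $(s/\sqrt2,\,d_1/\sqrt2,\,d_2/\sqrt2)$ to the standard frame and scales by $\sqrt2$ carries $u_1,u_2,v_1,v_2$ to $(1,1,0),(1,-1,0),(1,0,1),(1,0,-1)$ respectively. A short computation with integer coefficients then identifies $\Phi(A)$ with the face-centred cubic lattice of the preceding paragraph: for integers $m,n,p,q$ one has $m(1,1,0)+n(1,-1,0)+p(1,0,1)+q(1,0,-1)=(m+n+p+q,\,m-n,\,p-q)$, whose coordinate-sum $2(m+p)$ is even, and conversely every $(a,b,c)\in\Z^3$ with $a+b+c$ even is of this form (solve $m-n=b$, $p-q=c$, $m+n+p+q=a$, which is consistent precisely because $a-b-c$ is even). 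Hence $\Phi(A)=\{(a,b,c)\in\Z^3:a+b+c\text{ even}\}$.

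Finally, a similarity multiplies all distances by a fixed factor, so it preserves discreteness and all the metric comparisons $\|x\|\le\|x-a\|$ that define the Dirichlet domain; therefore $A=\Phi^{-1}(\Phi(A))$ is discrete, and its Dirichlet domain is the $\Phi^{-1}$-image of that of the face-centred cubic lattice. By the discussion preceding the lemma the latter is a rhombic dodecahedron, and its image under the similarity $\Phi^{-1}$ is again a rhombic dodecahedron, since the defining $1:\sqrt2$ ratio of the diagonals of the rhombic faces is scale-invariant. This completes the argument. I expect no serious obstacle: once (1)--(3) are seen to force $s,d_1,d_2$ to be an equal-length orthogonal triple, everything reduces to the already-established face-centred-cubic computation. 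The only point demanding genuine care is verifying that $\Phi(A)$ is \emph{exactly} the even-coordinate-sum lattice, and not a proper sub- or overlattice, which is precisely the integer bookkeeping above.
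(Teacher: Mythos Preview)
Your argument is correct and is exactly the fleshing-out the paper invites: the lemma is stated there as a ``rephrasing'' of the preceding description of the face-centred cubic lattice, with no further proof, and you have supplied precisely the missing details---showing that conditions (1)--(3) force $\{s,d_1,d_2\}$ to be an orthogonal equilateral frame and then identifying $A$, via a similarity, with the even-coordinate-sum sublattice of $\Z^3$. The integer bookkeeping verifying that $\Phi(A)$ is exactly that lattice (not a proper sub- or overlattice) is correctly handled.
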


\begin{remark}\label{rem}
Note that the conclusion of the lemma includes the observation that $A$ is not
contained in a plane. Thus one cannot find  such vectors 
$u_1,u_2,v_1,v_2$ in $\R^d$ for $d<3$.
\end{remark}

\begin{proposition}\label{p:makeDDH} Let $\G$ be a group. Suppose that
$\a_1,\a_2,\b_1,\b_2$ commute and are conjugate in $\G$. Let
$A=\langle \a_1,\a_2,\b_1,\b_2\rangle $ and suppose that 
\begin{enumerate}
\item[(i)] $\a_1\a_2=\b_1\b_2$, 
\item[(ii)] $\a_1\in [Z_\G(\a_2), Z_\G(\a_2)]$ and $\b_1\in [Z_\G(\b_2), Z_\G(\b_2)]$, and 
\item[(iii)] there exists $\g\in Z_\G(\a_1\a_2^{-1})$ with $\g^{-1}(\b_1\b_2^{-1})\g=\a_2\a_1^{-1}$.
\end{enumerate}
Then, whenever $\G$ acts by semisimple isometries on a complete {\rm{CAT}}$(0)$  space $X$, either
$A$ fixes a point, or $A$ leaves invariant an isometrically embedded 3-flat $\E^3\hookrightarrow X$, acting properly and cocompactly on it by isometries, with Dirichlet
domain a rhombic dodecahedron.
\end{proposition}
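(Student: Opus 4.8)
The plan is to separate the behaviour of the four conjugate generators and to reduce everything to Lemma~\ref{l:octo}.

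\emph{Dichotomy and the elliptic case.} Since $\a_1,\a_2,\b_1,\b_2$ are conjugate in $\G$, in any action they are isometries of the same type and share a single translation length $\ell=\|\a_1\|$. If $\ell=0$ all four are elliptic, and I would show $A$ fixes a point by producing a bounded orbit: fixing a fixed point for each generator and peeling the generators off one at a time, each cyclic factor $\langle\a_i\rangle$ (resp.\ $\langle\b_j\rangle$) preserves the distance to its own fixed point, so every orbit $A.x$ is bounded; its circumcentre is then fixed by all of $A$. This is the first alternative, and the real content is the hyperbolic case $\ell>0$, treated below.

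\emph{Passing to a flat and translating the hypotheses.} Assume now that all four generators are hyperbolic. Every element of $\G$, hence of $A$, is semisimple, so the Flat Torus Theorem applies to the (finitely generated, abelian) group $A$ and produces an $A$-invariant $k$-flat $E\cong\R^k$ on which $A$ acts by translations; write $\tau\colon A\to\R^k$ for the homomorphism recording these translation vectors, so that $|\tau(a)|=\|a\|$ whenever $a$ is hyperbolic. Put $u_1=\tau(\a_1)$, $u_2=\tau(\a_2)$, $v_1=\tau(\b_1)$, $v_2=\tau(\b_2)$; these have common length $\ell$. The key dictionary is that, for commuting hyperbolics with axes in $E$, the relation $\g\perp\d$ holds if and only if $\tau(\g)\cdot\tau(\d)=0$: indeed $\tau(\d)$ is the direction in which $\d$ translates $E$, and by Proposition~\ref{p:split} $\g\perp\d$ says exactly that $\d$ acts trivially on the $\tau(\g)$-line of the splitting $\mathrm{Min}(\g)=Y\times\R$. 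With this in hand, hypothesis (i) gives $u_1+u_2=v_1+v_2$ by additivity of $\tau$; hypothesis (ii) together with Lemma~\ref{l:null}(ii) gives $\a_1\perp\a_2$ and $\b_1\perp\b_2$, i.e.\ $u_1\cdot u_2=0$ and $v_1\cdot v_2=0$; and since $u_1\perp u_2$ the element $\a_1\a_2^{-1}$ translates $E$ by $u_1-u_2\neq0$ and is hyperbolic (likewise $\b_1\b_2^{-1}$), so hypothesis (iii) is precisely the input required to apply Lemma~\ref{l:null}(iii) to the commuting hyperbolic pair $(\a_1\a_2^{-1},\b_1\b_2^{-1})$, yielding $(\a_1\a_2^{-1})\perp(\b_1\b_2^{-1})$, that is $(u_1-u_2)\cdot(v_1-v_2)=0$. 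After rescaling by $1/\ell$ the four unit vectors satisfy all three hypotheses of Lemma~\ref{l:octo}.

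\emph{Conclusion.} By Remark~\ref{rem} the four vectors are not coplanar, so they span a $3$-dimensional subspace $V\subseteq\R^k$ (forcing $k\ge3$); let $E_3\subset E$ be the $3$-flat through a base point in the directions $V$. Then $A$ preserves $E_3$ and acts on it by the translation lattice $\tau(A)=\langle u_1,u_2,v_1,v_2\rangle_{\Z}$, which by Lemma~\ref{l:octo} is discrete with Dirichlet domain a rhombic dodecahedron. To see the action is proper and cocompact I would note that relation (i) presents $A$ as a quotient of $\Z^4/\langle\a_1\a_2\b_1^{-1}\b_2^{-1}\rangle\cong\Z^3$, while $\tau(A)$ has rank $3$; the resulting surjection $\Z^3\twoheadrightarrow A\xrightarrow{\ \tau\ }\tau(A)\cong\Z^3$ must then be an isomorphism, so $A\cong\Z^3$ and $\tau$ is injective. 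Hence $A$ acts faithfully on $E_3$ as the discrete, cocompact translation group $\tau(A)$, which is the second alternative.

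\emph{Where the difficulty lies.} The one genuinely delicate step is obtaining condition~(3) of Lemma~\ref{l:octo}: one must check that hypothesis~(iii) really is the hypothesis of Lemma~\ref{l:null}(iii) for the pair $(\a_1\a_2^{-1},\b_1\b_2^{-1})$—that the supplied $\g$ centralises one of these hyperbolics and conjugates the other to its inverse—and that orthogonality of their axes coincides with orthogonality of the vectors $u_1-u_2$ and $v_1-v_2$ inside the single flat $E$. The remaining work is bookkeeping: using Remark~\ref{rem} to pin the flat at dimension exactly $3$, and confirming that the translation action is faithful (so genuinely proper) rather than merely having discrete image.
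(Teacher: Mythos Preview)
Your argument is correct and follows essentially the same route as the paper's: separate the elliptic and hyperbolic cases, then in the hyperbolic case invoke the Flat Torus Theorem and translate hypotheses (i)--(iii) via Lemma~\ref{l:null} into the three conditions of Lemma~\ref{l:octo}. The only cosmetic differences are that the paper bounds the dimension of the flat by~$3$ at the outset (viewing $A$ as a quotient of $\Z^3$ through relation~(i)) rather than cutting down to a $3$-flat at the end, and that you supply an explicit Hopfian argument for properness that the paper leaves implicit.
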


\begin{proof} Since the given generators of $A$ are all conjugate, either all are
elliptic or all are hyperbolic. If they are elliptic, then they have a common fixed point, since
they commute. If they are hyperbolic, then by the Flat Torus Theorem
 $A$ leaves
invariant a $k$-flat $E\subset X$, with $k\le 3$, and acts on it by translations;
since the $a_i$ and $b_i$ are hyperbolic, they act non-trivially on $E$, and since
they are conjugate in $\G$, their translations lengths are equal. 
Let $u_i$ (resp.~$v_i$) be the translation vector of $\a_i|_E$ (resp.~$\b_i|_E$). 
According to
Lemma \ref{l:null}, 
condition (ii) implies that $u_1\perp u_2$ and $v_1\perp v_2$ and condition (iii) implies that $(u_1-u_2)\perp (v_1-v_2)$. 
Lemma \ref{l:octo} (with the remark that follows it) completes the proof.
\end{proof}

\begin{remark} The above proposition admits obvious variations in which
conditions (ii) and (iii) are altered to allow parts (2) and (3)
of Lemma \ref{l:null} to be applied
in different combinations.
\end{remark}

\section{The shape of Nielsen-hyperbolic actions of {\rm{Aut}}$(F_3)$}

The free abelian subgroups of Aut$(F_3)$ have rank at most 4 (the virtual cohomological dimension of Aut$(F_3)$). Each Nielsen $\Z^4$ is conjugate
to $\Lambda = \langle \l_{21},\,\r_{21},\,\l_{31},\,\r_{31}\rangle $.

Proposition \ref{p:makeDDH} applies directly to the image of $\Lambda$ in ${\rm{Out}}(F_3)$.

\begin{lemma}\label{l:out3} In $\G={\rm{Out}}(F_3)$, the elements $\a_1:=\l_{21}^{-1},\,
\a_2:=\r_{21},\, \b_1=\l_{31}^{-1},\, \b_2=\r_{31}$ and $\g = \varepsilon_2$
satisfy the conditions of Proposition \ref{p:makeDDH}.
\end{lemma}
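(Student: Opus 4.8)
The plan is to verify directly that the five elements listed satisfy each of the three numbered conditions of Proposition~\ref{p:makeDDH}, working entirely inside $\G={\rm{Out}}(F_3)$. First I would record the action of the four Nielsen transformations on a basis $\{a_1,a_2,a_3\}$ of $F_3$: with $\a_1=\l_{21}^{-1}$ acting as $[a_2\mapsto a_1^{-1}a_2]$, $\a_2=\r_{21}$ as $[a_2\mapsto a_2a_1]$, $\b_1=\l_{31}^{-1}$ as $[a_3\mapsto a_1^{-1}a_3]$, and $\b_2=\r_{31}$ as $[a_3\mapsto a_3a_1]$. These four commute because $\a_1,\a_2$ touch only $a_2$ while $\b_1,\b_2$ touch only $a_3$, and within each pair the two transformations multiply $a_1$ on opposite sides, so they commute in ${\rm{Aut}}(F_3)$ (and hence in ${\rm{Out}}(F_3)$). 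They are all conjugate because every Nielsen transformation is conjugate to every other in ${\rm{Aut}}(F_3)$, a fact already invoked in the introduction; I would cite this rather than reprove it.

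For condition (i) I would compute $\a_1\a_2$ and $\b_1\b_2$ in ${\rm{Aut}}(F_3)$: the product $\l_{21}^{-1}\r_{21}$ sends $a_2\mapsto a_1^{-1}a_2a_1={\rm{ad}}_{a_1^{-1}}(a_2)$ and fixes the other generators, so $\a_1\a_2={\rm{ad}}_{a_1^{-1}}$ restricted to the effect on $a_2$; likewise $\b_1\b_2$ realises ${\rm{ad}}_{a_1^{-1}}$ on $a_3$. The point is that both products equal the same inner automorphism ${\rm{ad}}_{a_1^{-1}}$ of $F_3$, so in ${\rm{Out}}(F_3)$ they are both trivial, hence equal; this is exactly why passing to ${\rm{Out}}$ (rather than ${\rm{Aut}}$) makes condition (i) hold. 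I would state this as the key observation: $\a_1\a_2=\b_1\b_2={\rm{ad}}_{a_1^{-1}}$ in ${\rm{Aut}}(F_3)$, which becomes $\a_1\a_2=\b_1\b_2=1$ in ${\rm{Out}}(F_3)$.

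For condition (ii) I need $\a_1\in[Z_\G(\a_2),Z_\G(\a_2)]$ and the symmetric statement. The natural strategy is to exhibit an explicit commutator: find elements commuting with $\a_2=\r_{21}$ whose commutator is $\a_1=\l_{21}^{-1}$ (as outer automorphisms). A good candidate is to use that $Z_\G(\r_{21})$ contains transformations acting on the subgroup $\langle a_1,a_2\rangle$, and that inside a rank-two picture $\l_{21}^{-1}$ can be written as a commutator of such; concretely I would look for $g,h$ fixing $a_1$ and commuting with $\r_{21}$ so that $[g,h]$ agrees with $\l_{21}^{-1}$ modulo inner automorphisms. For condition (iii) I would check that $\g=\varepsilon_2$ (the sign change $[a_2\mapsto a_2^{-1}]$, fixing $a_1,a_3$) lies in $Z_\G(\a_1\a_2^{-1})$ and conjugates $\b_1\b_2^{-1}$ to $\a_2\a_1^{-1}$: since $\a_1\a_2^{-1}=\l_{21}^{-1}\r_{21}^{-1}$ and $\b_1\b_2^{-1}=\l_{31}^{-1}\r_{31}^{-1}$ are built from $a_2$- and $a_3$-operations respectively, conjugation by $\varepsilon_2$ should invert the relevant generator and swap left/right, yielding $\a_2\a_1^{-1}=\r_{21}\l_{21}$.

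The main obstacle will be condition (ii): unlike (i) and (iii), which reduce to transparent normal-form computations, expressing $\l_{21}^{-1}$ as an honest commutator of elements of the centralizer $Z_{{\rm{Out}}(F_3)}(\r_{21})$ requires understanding that centralizer well enough to produce the two commuting factors. The safe route is to work first in ${\rm{Aut}}(F_3)$, exhibit explicit automorphisms fixing $a_1$ and commuting with $\r_{21}$ whose commutator equals $\l_{21}^{-1}$ up to an inner automorphism, and then descend to ${\rm{Out}}(F_3)$ where that inner correction vanishes; I would do the bookkeeping on how each candidate acts on $a_1,a_2,a_3$ and confirm the commutator relation by direct substitution, using the conventions $[\a,\b]=\a\b\a^{-1}\b^{-1}$ and ${\rm{ad}}_{\c\d}={\rm{ad}}_\c\circ{\rm{ad}}_\d$ fixed earlier. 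Once all three conditions are checked, Proposition~\ref{p:makeDDH} applies verbatim to $\G={\rm{Out}}(F_3)$ and the lemma follows.
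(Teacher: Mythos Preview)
Your argument for condition (i) contains a genuine error. The automorphism $\a_1\a_2=\l_{21}^{-1}\r_{21}$ sends $a_2\mapsto a_1^{-1}a_2a_1$ but \emph{fixes} $a_3$, so it is \emph{not} equal to ${\rm{ad}}_{a_1^{-1}}$ in ${\rm{Aut}}(F_3)$, and it is \emph{not} trivial in ${\rm{Out}}(F_3)$; likewise for $\b_1\b_2$. What is true (and what the paper uses) is that the \emph{product} $\a_1\a_2\b_1\b_2=\l_{21}^{-1}\r_{21}\l_{31}^{-1}\r_{31}$ is the inner automorphism ${\rm{ad}}_{a_1}^{\pm 1}$, because it conjugates \emph{every} basis element by $a_1^{\mp 1}$. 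Passing to ${\rm{Out}}(F_3)$ this product becomes trivial, and that is what links $\a_1\a_2$ to $\b_1\b_2$.

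Your plan for condition (ii) is also misdirected. Searching inside the rank-two subgroup $\langle a_1,a_2\rangle$ for elements of $Z_\G(\r_{21})$ whose commutator is $\l_{21}^{-1}$ will not succeed: the centralizer of $\r_{21}$ restricted to ${\rm{Aut}}(\langle a_1,a_2\rangle)$ is virtually cyclic and cannot produce $\l_{21}^{-1}$ as a commutator. The paper exploits the \emph{third} basis element via the Steinberg-type relation $[\l_{23}^{-1},\l_{31}^{-1}]=\l_{21}^{-1}$, noting that both $\l_{23}$ and $\l_{31}$ commute with $\r_{21}$; the symmetric relation $[\r_{23}^{-1},\r_{31}^{-1}]=\r_{21}^{-1}$ handles $\b_1\in[Z_\G(\b_2),Z_\G(\b_2)]$. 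Finally, for (iii) you have the roles reversed: $\varepsilon_2$ fixes $a_3$ and therefore fixes $\l_{31},\r_{31}$ (i.e.\ centralises $\b_1\b_2^{-1}$), while it interchanges $\l_{21}^{-1}$ and $\r_{21}$, conjugating $\a_1\a_2^{-1}$ to its inverse; you should reorganise the verification accordingly.
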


\begin{proof} In ${\rm{Out}}(F_3)$ we have ${\rm{ad}}_{a_1} = \l_{21}^{-1}\r_{21}\l_{31}^{-1}\r_{31}=1$,
so $\a_1\a_2=\b_1\b_2$.

A direct calculation yields the well-known relation
$[\l_{23}^{-1},\, \l_{31}^{-1}] = \l_{21}^{-1}$, 
and both $\l_{23},\, \l_{31}$ commute with
$\r_{21}$. Thus $\l_{21}$ lies in the commutator subgroup of $\r_{21}$. Likewise,
$[\r_{23}^{-1},\, \r_{31}^{-1}] = \r_{21}^{-1}$ implies that 
$\r_{21}$ is in the commutator subgroup of $\l_{21}$.

Conjugation by $\varepsilon_2$ leaves $\l_{31}$ and $\r_{31}$ fixed while
interchanging $\l_{21}^{-1}$ and $\r_{21}$.
\end{proof}

Proposition \ref{p:makeDDH} does not 
apply directly to $\Lambda\subset{\rm{Aut}}(F_3)$ but the difficulty is a minor one.
 
 \medskip
 
\noindent{\em{Proof of Theorem \ref{t:ddh}}}.
Suppose that ${\rm{Aut}}(F_3)$ is acting by isometries on a complete {\rm{CAT}}$(0)$  space $X$ with the Nielsen
moves $\l_{ij}$ acting as hyperbolic isometries. According to 
the Flat Torus Theorem, 
$\Lambda\cong\Z^4$ leaves invariant a $k$-flat $E\subset X$
on which it acts by translations, with the generators $\l_{ij}$ and $\r_{ij}$ 
(which are conjugate in ${\rm{Out}}(F_3)$) acting non-trivially by translations of the same length. 
Lemma \ref{l:inner} tells us that ${\rm{ad}}_{a_1}= \l_{21}^{-1}\r_{21}\l_{31}^{-1}\r_{31}$ acts elliptically 
on $X$ and hence trivially on $E$. Replacing $E$ by the convex hull of a $\Lambda$-orbit,
we may assume that it has dimension at most 3.

The calculations in the proof of Lemma \ref{l:out3} allow us to apply 
Lemma \ref{l:null}: as in the proof of  Proposition \ref{p:makeDDH}, it
implies that the translation vectors of $\l_{21}|_E,\, \r_{21}|_E,\, \l_{31}|_E,\, \r_{31}|_E$
satisfy the conditions of Lemma \ref{l:octo}. Hence $E$ has dimension 3
(remark \ref{rem}), the
action of $\Lambda/\langle {\rm{ad}}_{a_1}\rangle $ is proper and cocompact, and a Dirichlet domain
for the action is a rhombic dodecahedron. \qed

\section{An abundance of actions when $n=3$}\label{s:N-hyp}

Let $\G$ be an arbitrary finitely generated group. In this
section we shall assign to each action of $\G$ on a {\rm{CAT$(0)$}
space $X$ actions of Aut$(F_3)$ and ${\rm{Out}}(F_3)$ on the Cartesian product of finitely
many copies of $X$. The assignment is such that if the generators
of $\G$ act as hyperbolic
isometries, then the induced actions of Aut$(F_3)$ and ${\rm{Out}}(F_3)$ will be Nielsen-hyperbolic.

The heart of the construction is the following proposition.

\begin{prop}\label{p:aut3toF}
For each positive integer $k$, there exists a subgroup of 
finite index $H_k\subset{\rm{Out}}(F_3)$ and a surjective 
homomorphism $\pi_k:H_k\to F_k$
such that the image of $\pi_k$ is generated by the images of powers of 
conjugates of Nielsen transformations $\l_{ij}$. 
\end{prop}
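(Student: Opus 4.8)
The plan is to leverage the theorem of Grunewald and Lubotzky \cite{GL} which asserts that $\mathrm{Out}(F_3)$ has a finite-index subgroup mapping onto a non-abelian free group. The target $F_k$ is nonabelian when $k \ge 2$, and finite-index subgroups of nonabelian free groups are again nonabelian free of higher (but finite) rank, so once I have any surjection from a finite-index subgroup onto some nonabelian free group I can pass to finite-index subgroups on both sides to realize any prescribed rank $k$ as a quotient. Thus the rank $k$ poses no real difficulty; the entire content lies in (a) producing the Grunewald--Lubotzky surjection and (b) arranging that the image is generated by powers of conjugates of Nielsen transformations $\l_{ij}$.

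The first step is to recall the structure of the Grunewald--Lubotzky construction. Their representations arise from the action of $\mathrm{Out}(F_3)$ (or rather a finite-index congruence-type subgroup) on the homology of finite-index characteristic subgroups of $F_3$, and the free quotient is extracted from the arithmetic group so obtained. I would cite the existence of a finite-index $H \subset \mathrm{Out}(F_3)$ and an epimorphism $\psi : H \twoheadrightarrow F_m$ for some $m \ge 2$. The second step is the crucial one for this paper: I must verify that the normal closure (in $H$) of the powers of Nielsen transformations lying in $H$ surjects onto $F_m$ under $\psi$, or equivalently that $\psi$ does not kill all such elements. The clean way to phrase this is to let $N \lhd H$ be the subgroup generated by all powers of conjugates of the $\l_{ij}$ that happen to lie in $H$, and to show $\psi(N) = F_m$.

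The key observation driving step two is that $\mathrm{Out}(F_3)$ is generated (up to finite index considerations) by Nielsen transformations together with the finite-order generators $\varepsilon_i$, and more to the point, $\mathrm{Out}(F_3)$ has no nontrivial finite quotients coming purely from the non-Nielsen part in a way that would let $\psi$ factor through it. Concretely, since $F_m$ is torsion-free, $\psi$ must kill every finite-order element of $H$, in particular all the involutions $\varepsilon_i$ and their $H$-conjugates; I would argue that $H$ is generated by powers of conjugates of Nielsen transformations together with torsion elements. Because $\psi$ annihilates the torsion and surjects onto $F_m$, the powers of conjugate Nielsen transformations must already surject. \textbf{The main obstacle} I anticipate is precisely controlling this generation statement: I need that the \emph{torsion-free part} of $H$ is, modulo the kernel of $\psi$, carried by Nielsen elements. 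This is most naturally handled by exhibiting explicit Nielsen-conjugate powers whose $\psi$-images generate $F_m$, rather than by an abstract generation argument; alternatively one invokes that the abelianization $H_1(H)$ is generated by Nielsen classes plus torsion, forcing any free quotient to be generated by Nielsen images.

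Assembling these pieces, the proof would run: invoke \cite{GL} to obtain $H$ and $\psi : H \twoheadrightarrow F_m$ with $m \ge 2$; observe $\psi$ kills torsion; deduce that the image is generated by $\psi$ of powers of conjugates of Nielsen transformations (the heart of the argument, via the generation statement above); then, given any $k$, choose a finite-index free subgroup $F_k' \le F_m$ of rank exactly $k$ (possible for any $k \ge 1$ by the Nielsen--Schreier index formula $\mathrm{rk} - 1 = [F_m : F_k'](m-1)$, solving for the index), set $H_k = \psi^{-1}(F_k')$, which has finite index in $H$ and hence in $\mathrm{Out}(F_3)$, and let $\pi_k = \psi|_{H_k}$; the generation property is inherited since elements of $H_k$ projecting to the free part are still words in Nielsen-conjugate powers. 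This yields the desired $H_k$, $\pi_k$, and generating set.
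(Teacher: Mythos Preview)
Your outline has the right skeleton --- Grunewald--Lubotzky followed by a rank adjustment via finite-index subgroups --- but both load-bearing steps are left as gaps rather than arguments.

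First, citing \cite{GL} as a black box does not let you conclude that Nielsen transformations survive in the free quotient. Your proposed justification, that a finite-index $H\subset{\rm Out}(F_3)$ is generated by Nielsen-conjugate powers together with torsion (or the $H_1$ version), is not a known fact for arbitrary such $H$; it borders on congruence-subgroup questions that are open. The paper does not argue abstractly here: it \emph{opens} the Grunewald--Lubotzky construction, building the explicit map $G\to{\rm GL}(2,\Z)$ from the $(-1)$-eigenspace on the homology of a specific double cover, and then observes directly that $\l_{12}$ and $\l_{21}$ are sent to matrices generating a finite-index subgroup of ${\rm GL}(2,\Z)$. After passing to a free image, Marshall Hall's theorem produces a finite-index subgroup retracting onto $L_2=\langle\mu(\l_{12}^p),\mu(\l_{21}^p)\rangle$. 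You note yourself that ``exhibiting explicit Nielsen-conjugate powers whose $\psi$-images generate'' is the natural route; that is the entire content of the proof, not a detail.

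Second, your rank reduction is incorrect as stated. A generic finite-index $F_k'\le F_m$ is \emph{not} generated by conjugates of powers of a fixed basis: for instance the kernel of $F_2=\langle a,b\rangle\to\Z/3$, $a,b\mapsto 1$, contains $ba^{-1}$, and any set of conjugates of powers of $a,b$ lands in $3\Z\times 3\Z$ under abelianisation, so cannot generate. ``Words in Nielsen-conjugate powers'' is not what the statement requires. The paper handles this by choosing the specific index-$(k-1)$ subgroup $L_k\subset L_2=\langle a,b\rangle$ with free basis $\{a^iba^{-i}:0\le i\le k-2\}\cup\{a^{k-1}\}$, each basis element visibly a conjugate of a power of $a$ or $b$, and then setting $H_k=\pi^{-1}(L_k)$.
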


\begin{proof} Fix a basis $\{a_1,a_2,a_3\}$ for $F_3$.
We first construct a map from a subgroup of finite index
in ${\rm{Out}}(F_3)$ to a free group of rank 2 so that the image is
generated by powers of $\l_{12}$ and $\l_{21}$.
This map is based on a construction of Grunewald and Lubotzky \cite{GL} (cf.~\cite{BVsurv}, Qu.9).

Regard $F_3$ as the fundamental group of a graph $R$ with
one vertex. The loops of length one are
labelled $\{a_1,a_2,a_3\}$. Consider the 2-sheeted
covering $\hat R\to R$ with fundamental group
$\langle a_1,a_2,a_3^2,a_3a_1a_3^{-1},a_3a_2a_3^{-1}
\rangle$ and let $G\subset{\rm{Aut}}(F_3)$ be the stabilizer of
this subgroup. 

$G$ acts on $H_1(\hat R,\mathbb Q)$
leaving invariant
the eigenspaces of the involution that generates the Galois group of the
covering. The eigenspace corresponding to the eigenvalue $-1$ is two
dimensional with basis $\{a_1-a_3a_1a_3^{-1},\, 
a_2-a_3a_2a_3^{-1}\}$. The action of $G$
with respect to this basis gives  an epimorphism $G\to{\rm{GL}}(2,\Z)$.
By replacing $G$ with a subgroup of finite index one
can ensure that the inner automorphisms in $G$ act trivially on 
$H_1(\hat R,\mathbb Q)$ (because ${\rm{Inn}}(F_n)\cap G$ has finite
image in ${\rm{GL}}(2,\Z)$, which is virtually torsion-free). 
 If $Q$ is the image of $G$
in ${\rm{Out}}(F_3)$, then we have a
map $\mu: Q\to{\rm{GL}}(2,\Z)$ whose image is of finite index. 

Since ${\rm{GL}}(2,\Z)$ has a free subgroup of
finite-index, by passing to a further subgroup if necessary we may assume
that $\mu(Q)$ is free. 
We choose $p$ so that  $\l_{12}^p,\l_{21}^p\in Q$. The images of  
 $\mu(\l_{12})$ and  $\mu(\l_{21})$ generate a subgroup of finite index in
${\rm{GL}}(2,\Z)$, so the subgroup $L_2$ generated
by $a:=\mu(\l_{12}^p)$ and $b:=\mu(\l_{21}^p)$ is free of rank 2. Applying Marshall Hall's
theorem, we pass to a subgroup of finite index in $\mu(Q)$ that retracts
onto  $L_2$. Let $H_2$ be the inverse image of this subgroup in
$Q$ and let $\pi:H_2\to L_2$ be its surjection to $L_2$. 

In $L_2=\langle a,b\rangle $ one has the subgroup $L_k$ of index $k-1$
with basis $\{a^iba^{-i},\,a^{k-1}\mid i=0,\dots,k-2\}$; this
is free of rank $k$ and is generated by conjugates of powers
of $a$ and $b$. Thus it suffices to take $H_k = \pi^{-1}(L_k)$. 
\end{proof}
 
\subsection{Induced actions}
Let $\G$ be a group generated by the image of an epimorphism
$\psi:F_r\to\G$ from a finitely generated free group. Suppose that $\G$
acts by isometries on a complete {\rm{CAT$(0)$} space $X$.
By composing 
$\psi$ with the map $H_r\to F_r$ constructed
in the preceding proposition, we obtain a surjection $H_r\to\G$ sending
powers of certain Nielsen transformations to generators of $\G$. And
by inducing the action we obtain an action of
{\rm{Aut}}$(F_3)$ on a product of finitely many copies of $X$. If 
the action of $\G$ on $X$ was by hyperbolic isometries, then the 
Nielsen transformations act as hyperbolic isometries in this induced
action. 

\section{Homomorphisms from ${\rm{Out}}(F_3)$ to mapping class groups}
\label{s:mcg3}

We saw in the introduction that when $n\ge 4$,
any homomorphism from ${\rm{Aut}}(F_n)$
to a mapping class group must send Nielsen transformations to roots of multi-twists.
We also noted that no homomorphisms with infinite image are known to exist.
The situation for $n=3$ is completely different.
Let $S_g$ be the closed surface of genus $g$ and let ${\rm{Mod}}(S_g)$
be its mapping class group. 
Let
${\rm{Mod}}(S_{g,1})$ be the mapping class group
of the genus $g$ surface with one boundary component.

\begin{theorem} \label{t:3toMCG}
For certain positive integers $g$, there
exist homomorphisms ${\rm{Out}}(F_3)\to{\rm{Mod}}(S_g)$ 
sending Nielsen transformations to elements of infinite order that are not roots of
multi-twists.
\end{theorem}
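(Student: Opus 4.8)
The plan is to build a homomorphism $\mathrm{Out}(F_3)\to\mathrm{Mod}(S_g)$ by composing the maps already in hand. From Proposition \ref{p:aut3toF} I have, for each $k$, a finite-index subgroup $H_k<\mathrm{Out}(F_3)$ and a surjection $\pi_k:H_k\to F_k$ whose image is generated by images of powers of conjugates of Nielsen transformations. The idea is to map $F_k$ into a mapping class group so that these Nielsen-images land on infinite-order elements that are \emph{not} roots of multi-twists, and then to promote the resulting homomorphism on the finite-index subgroup $H_k$ to all of $\mathrm{Out}(F_3)$ by an induction/wreath-product argument, exactly as packaged in Lemma \ref{l:wr}.

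First I would choose a surface $S$ and a collection of mapping classes that realise $F_k$ faithfully enough: the standard way is to take pseudo-Anosov (or otherwise dynamically rich) homeomorphisms supported so that they generate a free group, for instance by a ping-pong argument on $\mathcal{PML}(S)$. The crucial design constraint is that the distinguished generators of $\pi_k(H_k)$ — the images of powers of conjugates of $\l_{ij}$ — must be sent to elements that are not roots of multi-twists. A pseudo-Anosov class is never a root of a multi-twist (its Nielsen--Thurston type is preserved under taking roots and powers, and multi-twists are reducible), so arranging for the relevant generators to map to pseudo-Anosov elements of infinite order suffices. Concretely I would pick the free generating set of $F_k$ and send each generator to a pseudo-Anosov, checking that the combination realising a given Nielsen-power is itself pseudo-Anosov.

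The second half is the passage from $H_k$ to $\mathrm{Out}(F_3)$. Since $H_k$ has finite index, I would arrange $H_k$ to be normal (replacing it by the normal core, still finite index, at the cost of enlarging $k$) and then apply Lemma \ref{l:wr} with $Q=\mathrm{Mod}(S)$: the homomorphism $\phi:H_k\to Q$ extends to $\Phi:\mathrm{Out}(F_3)\to Q\wr(\mathrm{Out}(F_3)/H_k)$, and the wreath product $Q\wr(\text{finite})$ embeds in a larger mapping class group $\mathrm{Mod}(S_g)$ by taking $g$ copies of $S$ and permuting them. Under this embedding the coordinate of $\Phi(\l_{ij}^p)$ in the $Q_1$-factor is the pseudo-Anosov $\phi(\l_{ij}^p)$, so $\Phi(\l_{ij}^p)$ has infinite order and cannot be a root of a multi-twist: the block-diagonal/permutation structure forces any power to be reducible along the separating curves between copies of $S$, whereas the $Q_1$-component remains pseudo-Anosov, so no power is a multi-twist.

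The main obstacle I anticipate is realising $F_k$ inside a mapping class group so that the \emph{specific} Nielsen-power generators are pseudo-Anosov, rather than merely generating a free group containing pseudo-Anosovs; a ping-pong or subsurface-projection verification is needed to rule out the accidental reducibility of the particular words $\pi_k(\l_{ij}^p)$. A secondary technical point is controlling the Nielsen--Thurston type under the wreath/permutation construction so that one may definitively conclude the image is not a root of a multi-twist — here the key fact is that roots and powers preserve the reducible/pseudo-Anosov dichotomy on each invariant piece, so it is enough to exhibit one pseudo-Anosov coordinate and observe that a genuine multi-twist would have to be reducible on every invariant subsurface.
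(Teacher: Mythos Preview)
Your approach is essentially the same as the paper's: map a finite-index subgroup $H_k$ to a mapping class group via $F_k$, then induce up using Lemma~\ref{l:wr} and embed the resulting wreath product in some $\mathrm{Mod}(S_g)$. Two points are worth flagging where the paper is cleaner and where you are creating work for yourself.

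First, your anticipated ``main obstacle'' is illusory. By the construction in Proposition~\ref{p:aut3toF}, the free \emph{basis} of $F_k$ already consists of images of (conjugates of) powers of Nielsen transformations. So you are free to send those basis elements wherever you like; no ping-pong or subsurface-projection argument is needed to control ``the particular words $\pi_k(\lambda_{ij}^p)$''. The paper exploits this directly: it simply surjects $F_r$ onto $\mathrm{Mod}(S_{h,1})$ (any finite generating set will do) sending one basis element to some infinite-order $\psi$ that is not a root of a multi-twist. No faithful embedding of $F_k$ is required, and since all Nielsen transformations are conjugate, controlling one controls them all.

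Second, your description of the wreath-product embedding (``taking $g$ copies of $S$ and permuting them'') needs care: for this to land in the mapping class group of a \emph{connected closed} surface you must work with surfaces with boundary and glue. The paper handles this in Proposition~\ref{l:sg}: realise the finite quotient $G$ as symmetries of a closed hyperbolic surface, equivariantly delete discs along a free orbit, and glue a copy of $S_{h,1}$ to each boundary circle. This gives an honest injection $\mathrm{Mod}(S_{h,1})\wr G \hookrightarrow \mathrm{Mod}(S_g)$, and then your final paragraph's reasoning goes through: some power of $\lambda$ acts on one of the glued subsurfaces as a power of $\psi$, so its image cannot be a root of a multi-twist.
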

 
I do not know the value of the least integer
$g$ for which there is such a homomorphism (and likewise for  ${\rm{Aut}}(F_3)$).

\medskip

\begin{proposition}\label{l:sg} For all positive integers $h$ and all
non-trivial finite groups $G$, there exist integers $g$ for which there is an injective homomorphism 
${\rm{Mod}}(S_{h,1})\wr G \to {\rm{Mod}}(S_{g})$.
\end{proposition}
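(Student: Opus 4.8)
The plan is to realize the wreath product $\mathrm{Mod}(S_{h,1})\wr G$ as a mapping class group of a surface assembled from $|G|$ copies of $S_{h,1}$ arranged symmetrically, so that $G$ permutes the copies and the base group acts independently on each copy. The key geometric idea is that a surface with boundary $S_{h,1}$ can be glued to a symmetric ``plumbing'' or ``connecting'' piece on which $G$ acts, producing a closed surface $S_g$ that carries the required symmetry.

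First I would construct the underlying surface. Take $m=|G|$ copies of $S_{h,1}$, labelled by the elements of $G$, and build a connected closed surface $S_g$ by attaching these copies to a central region on which $G$ acts by permuting the slots corresponding to the $m$ boundary curves. Concretely, one can embed $G$ in a symmetric group $\mathrm{Sym}(m)$ via the regular representation and realize this symmetric group action geometrically: choose a closed surface $\Sigma$ equipped with $m$ disjoint disks that are cyclically (or regularly) permuted by a finite-order homeomorphism group isomorphic to $G$, then remove the disks and glue in the $m$ copies of $S_{h,1}$ along their boundaries, matching up the $G$-action. The genus $g$ is then determined by $h$, $m$, and the genus of $\Sigma$ by an Euler characteristic count. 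The homeomorphisms of each $S_{h,1}$ extend by the identity across the rest of $S_g$ (using that the mapping class group of a surface with one boundary component is generated by homeomorphisms supported in the interior, fixing a collar of the boundary pointwise), giving the base subgroup $\bigoplus_{x\in G}\mathrm{Mod}(S_{h,1})_x$, while the $G$-symmetries of $S_g$ induce the permutation action.

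The homomorphism $\Phi:\mathrm{Mod}(S_{h,1})\wr G\to\mathrm{Mod}(S_g)$ is then defined by sending the $x$-th copy of $\mathrm{Mod}(S_{h,1})$ to the mapping classes supported on the $x$-th copy of $S_{h,1}$ inside $S_g$, and sending $G$ to the subgroup of $\mathrm{Mod}(S_g)$ realized by the symmetric homeomorphisms that permute the copies. That these assignments respect the semidirect-product relations of the wreath product follows from the $G$-equivariance of the construction: conjugating a homeomorphism supported on copy $x$ by the symmetry corresponding to $y\in G$ yields a homeomorphism supported on copy $yx$, matching the left-permutation action on the base.

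The main obstacle is proving \emph{injectivity}. For the base subgroup I would use the fact that mapping classes supported on disjoint subsurfaces whose boundary curves are nonseparating/essential and pairwise non-isotopic generate their direct product faithfully inside $\mathrm{Mod}(S_g)$; this requires checking that a nontrivial mapping class of $S_{h,1}$ does not become trivial when extended by the identity, which one verifies by examining its action on curves (or homology) carried by that copy, invoking the change-of-coordinates principle and the injectivity of the map $\mathrm{Mod}(S_{h,1})\to\mathrm{Mod}(S_g)$ induced by the inclusion of an incompressible subsurface. The more delicate point is showing that the $G$-part injects and that no nontrivial element of the base is killed by collision with the image of $G$; here I would argue that the permutation action on the set of isotopy classes of the $m$ subsurface-defining multicurves detects the image of $G$ faithfully, so an element of the kernel must lie in the base and then be trivial by the previous step. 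Assembling these two facts gives that $\Phi$ is injective on the whole semidirect product, completing the proof for the resulting value(s) of $g$.
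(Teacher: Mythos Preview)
Your construction is exactly the paper's: realise $G$ faithfully as symmetries of a closed surface, equivariantly remove discs at a free orbit, glue in $|G|$ copies of $S_{h,1}$, and extend mapping classes by the identity. The paper asserts injectivity with only a parenthetical remark, whereas you spell out the standard argument via the inclusion-of-subsurface map and the permutation action on the boundary multicurve; one small slip is that those boundary curves are separating, not nonseparating, but ``essential and pairwise non-isotopic'' is the condition you actually need and use.
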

 
\begin{proof} Every finite group has a faithful realisation 
as a group of symmetries of a closed hyperbolic surface (and therefore embeds in the mapping class group of that surface). 
We realise $G$ on the surface $Y$ and equivariantly delete an open
disc about each point in a free orbit. We then glue a copy of $S_{h,1}$
to each of the  resulting boundary components  and extend
the action of $G$ in the obvious manner. Let $S_g$ be the resulting surface.

Corresponding to each of the attached copies of $S_{h,1}$ there is an
injective homomorphism ${\rm{Mod}}(S_{h,1}) \to {\rm{Mod}}(S_{g})$ 
obtained by extending
homeomorphisms to be the identity on the complement of the attached copy
of $S_{h,1}$. (The extension respects isotopy classes.) Thus we obtain
$|G|$ commuting copies $\{M_\g\mid \g\in G\}$
 of ${\rm{Mod}}(S_{h,1})$ in ${\rm{Mod}}(S_{g})$. The
action of $G\subset {\rm{Mod}}(S_{g})$ by conjugation permutes the $M_g$
and the canonical map $G\ltimes \otimes_{\g\in G} M_\g\to {\rm{Mod}}(S_{g})$
 is injective.
\end{proof}
 
\noindent{\em{Proof of Theorem \ref{t:3toMCG}:}} Let $F_r$ be a free group of
rank $r$  that maps onto $\G={\rm{Mod}}(S_{h,1})$ sending at least one basis
element, say $a_1$,
to an element  of infinite order $\psi\in\G$ that is not a root of a multi-twist.  
Proposition \ref{p:aut3toF}  
provides a normal subgroup of finite index $H_r\subset{\rm{Out}}(F_3)$
 that maps onto $F_r$
sending a power of a Nielsen transformation, say $\l^p$,
 to $a_1$. Thus we obtain a homomorphism $H_r\to \G$ sending $\l^p$ to $\psi$.
 We replace $H_r$ with a subgroup of finite index  that is normal in
${\rm{Out}}(F_3)$ (with quotient $\Omega$, say)
and consider the induced homomorphism
${\rm{Out}}(F_3)\to\G \wr \Omega$ as in Lemma \ref{l:wr}.   As in Proposition
 \ref{l:sg},
this wreath product acts on a closed surface $S_g$. By construction,
a non-zero power of
$\l^p$ acts on one of the subsurfaces $S_{h,1}$ as a non-zero power of $\psi$
and hence the image of  $\l$ in  ${\rm{Mod}}(S_{g})$  is not a root of a multi-twist.
\qed
 
\section{Nielsen generators are not ballistic if $n\ge 6$}

\begin{lemma}\label{l:dehn} Let
$\lambda\in{\rm{Aut}}(F_n)$ be a  Nielsen generator and let $Z$ be its centralizer.
If $n\ge 6$, then $[\lambda]=0$ in $H_1(Z,\Z)$. 
\end{lemma}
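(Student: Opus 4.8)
The plan is to exhibit $\lambda$ as a product of commutators inside its own centralizer $Z$, just as the proof of Lemma~\ref{l:out3} does for the generators of a Nielsen $\Z^4$ in ${\rm{Out}}(F_3)$. Recall the standard relation $[\l_{ij}^{-1},\,\l_{jk}^{-1}]=\l_{ik}^{-1}$ (a direct computation on the basis), which expresses one Nielsen generator as a commutator of two others. To use this to conclude $[\lambda]=0$ in $H_1(Z,\Z)$, I need the two factors $\l_{ij}$ and $\l_{jk}$ witnessing $\lambda=\l_{ik}$ to both lie in the centralizer $Z=Z_{{\rm{Aut}}(F_n)}(\l_{ik})$. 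Since all Nielsen generators are conjugate in ${\rm{Aut}}(F_n)$, it suffices to treat $\lambda=\l_{ik}$ for a single convenient pair of distinct indices $i,k$.

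First I would fix $\lambda=\l_{12}$ and look for an index $j$ (with $j\neq 1,2$) together with a relation of the form $[\l_{1j}^{\pm1},\l_{j2}^{\pm1}]=\l_{12}^{\pm1}$ (up to sign conventions matching the paper's commutator convention $[\a,\b]=\a\b\a^{-1}\b^{-1}$). The key point to verify is that both $\l_{1j}$ and $\l_{j2}$ commute with $\l_{12}$: two Nielsen transformations $\l_{ab}$ and $\l_{cd}$ commute provided the substitutions do not interfere, and $\l_{1j},\l_{j2}$ can be checked to commute with $\l_{12}$ for suitable $j$ by a direct calculation on the basis. Once this is confirmed, $\l_{1j},\l_{j2}\in Z$ and the relation shows $\lambda=\l_{12}\in[Z,Z]$, whence $[\lambda]=0$ in $H_1(Z,\Z)$.

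The role of the hypothesis $n\ge 6$ is the main thing to pin down: I would need enough spare indices to find \emph{several} such witnessing relations simultaneously, or more precisely to arrange that the auxiliary generators $\l_{1j},\l_{j2}$ used in the commutator all genuinely centralize $\lambda$ without the two chosen index-triples colliding with each other or with the indices $\{1,2\}$. A single relation $[\l_{1j}^{-1},\l_{j2}^{-1}]=\l_{12}^{-1}$ uses one extra index $j$, so one might hope $n\ge 3$ would do; the stronger bound $n\ge 6$ suggests the argument actually requires writing $\lambda$ (or a conjugate) as a product of \emph{two} independent commutators, or requires the centralizing elements themselves to be expressible as commutators in $Z$, each consuming fresh indices. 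I expect the main obstacle to be bookkeeping the index constraints so that all the Nielsen transformations entering the commutator expression lie in $Z_{{\rm{Aut}}(F_n)}(\lambda)$, and verifying the commutation relations $\l_{1j},\l_{j2}\in Z$ by explicit evaluation on the basis $\{a_1,\dots,a_n\}$.

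Having established $\lambda\in[Z,Z]$, the conclusion $[\lambda]=0$ in $H_1(Z,\Z)=Z/[Z,Z]$ is immediate. This lemma then feeds into the ``not ballistic'' conclusion via Lemma~\ref{l:null}(i): that injectivity statement shows a hyperbolic $\lambda$ would map non-trivially into $H_1(Z_\G(\lambda))$, contradicting $[\lambda]=0$, so $\lambda$ cannot be hyperbolic and must have zero translation length.
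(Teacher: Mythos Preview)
Your plan has a genuine gap at the step you flag as ``the key point to verify'': the commutation $\l_{1j}\in Z_{{\rm{Aut}}(F_n)}(\l_{12})$ simply fails. With the paper's conventions, $\l_{12}\l_{1j}(a_1)=a_ja_2a_1$ while $\l_{1j}\l_{12}(a_1)=a_2a_ja_1$, so $\l_{1j}$ does \emph{not} centralize $\l_{12}$ for any $j\neq 1,2$. Thus the Steinberg relation $[\l_{1j}^{-1},\l_{j2}^{-1}]=\l_{12}^{-1}$ does not take place inside $Z$, and the argument collapses. (One factor, $\l_{j2}$, does lie in $Z$; the other does not. Replacing $\l_{1j}$ by $\r_{1j}$ gives an element of $Z$, but then the commutator with $\l_{j2}$ produces a power of $\r_{12}$ rather than $\l_{12}$.) Your speculation that ``one might hope $n\ge 3$ would do'' is also ruled out: Section~\ref{s:N-hyp} constructs Nielsen-hyperbolic actions of ${\rm{Aut}}(F_3)$, and Lemma~\ref{l:null}(i) then forces $[\l]$ to have infinite order in $H_1(Z)$ when $n=3$. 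So the bound $n\ge 6$ is not bookkeeping --- the statement is genuinely false for small $n$, and no rearrangement of Steinberg-type relations among Nielsen generators alone will work there.

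The paper's proof is of an entirely different nature. It realises $\lambda$ as a Dehn twist $\delta$ about a non-separating curve on a compact orientable surface $S$ of genus $\lfloor n/2\rfloor$ with $\pi_1S\cong F_n$, giving a homomorphism ${\rm Mod}(S)\to{\rm{Aut}}(F_n)$ sending $\delta$ to $\lambda$. The centralizer $C(\delta)\subset{\rm Mod}(S)$ maps to $Z$, so it suffices to kill $[\delta]$ in $H_1(C(\delta))$. Now $C(\delta)$ is the image of the mapping class group of the surface $S'$ obtained by cutting along the curve, and the \emph{lantern relation} in $S'$ expresses the twist about the new boundary component as a word that is trivial in $H_1({\rm Mod}(S'))$. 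The lantern needs the six interior curves to be non-separating, which requires $S'$ to have genus at least $2$; since cutting a non-separating curve drops the genus by one, this forces $\lfloor n/2\rfloor-1\ge 2$, i.e.\ $n\ge 6$. That is the true origin of the hypothesis.
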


\begin{proof} For any $n\ge 2$ one can realise $\l$ as 
a Dehn twist $\delta$ about a non-separating curve on a compact orientable surface 
$S$ that has genus $\lfloor n/2\rfloor$ and euler characterisic $(1-n)$.  
More precisely, taking a basepoint on the boundary of $S$, 
there is an isomorphism\footnote{A 1-holed torus $T$ is the regular neighbourhood of 
the union $Y$ of 2 simple loops that intersect once; orient them, label them $a_1,a_2$
and join their intersection point to the basepoint by an arc that does not intersect the loops;
 the automorphism of $\pi_1T\cong F_2$ induced by the Dehn twist in $a_1$ is easily seen to
be one of $\l_{21}^{\pm 1}$ or $\r_{21}$ (the four possibilities corresponding to the choices 
of orientation  for the loops). For $n>2$, one attachs a suitable surface  to $T$ along an arc in its boundary that
includes the basepoint and extends $\{a_1,a_2\}$ to a basis for $\pi_1$ where the remaining
basis loops lie in the attached surface.}
from $\pi_1S$ to $F_n$ that induces a homomorphism  from the mapping class group $\modS$  to ${\rm{Aut}}(F_n)$ sending $\delta$ to $\lambda$.   

Let $C(\delta)$ be the centralizer of $\delta$ in  $\modS$. The map from $\langle \delta\rangle =\langle \lambda\rangle $ to $H_1(Z,\Z)$ factors
through $H_1(C(\delta),\Z)$, so it is enough to prove that $\delta$ has trivial image in $H_1(C(\delta),\Z)$. If $\delta$
is the twist in a loop $c$, then $C(\delta)$ is the image of the natural map to $\modS$ of the mapping class group of the
surface $S'$ obtained by cutting $S$ open along $c$. Thus it is enough to prove that in a surface $S'$ of genus at least $2$, the
Dehn twist in any loop $c$ parallel to a boundary curve  is trivial in ${\rm{Mod}}(S')$.
This can be seen using the lantern relation \cite{korkmaz}.
In more detail, the lantern
relation involves 7 loops on a 4-holed sphere; if one embeds the 4-holed sphere in $S'$ so that one of the  loops is
sent to $c$
and the remaining 6 loops are non-separating,  then the relation takes the
form  $\delta_0\delta_1\delta_2\delta_3 = \delta_4\delta_5\delta_6$,
where $\delta_0$ is the twist in $c$ and the remaining $\delta_i$ are positive twists in non-separating loops. 
The twists in any two non-separating loops are conjugate in ${\rm{Mod}}(S')$, hence equal (to $\tau$ say) in
$H_1({\rm{Mod}}(S'),\Z)$. So in $H_1$ the above relation becomes $\delta_0\tau^3=\tau^3$.
\end{proof}
 
\begin{proposition} Suppose $n\ge 6$. Whenever ${\rm{Aut}}(F_n)$ acts by isometries on a
complete {\rm{CAT}}$(0)$ space, the Nielsen generators $\l_{ij}$ and $\r_{ij}$ have zero
translation length (i.e.~they are either elliptic or else they are neutral parabolics).
\end{proposition}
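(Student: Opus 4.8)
The plan is to show that a Nielsen generator $\lambda$ cannot have strictly positive translation length by combining the homological vanishing of Lemma \ref{l:dehn} with the structure of $\mathrm{Min}(\lambda)$ coming from Proposition \ref{p:split}. Suppose for contradiction that $\lambda$ acts as a hyperbolic isometry. Then $\lambda$ is in particular semisimple with $\|\lambda\|>0$, and its centralizer $Z=Z_{{\rm{Aut}}(F_n)}(\lambda)$ acts on $\mathrm{Min}(\lambda)=Y\times\R$ preserving the splitting, acting by translation on the $\R$-factor by Proposition \ref{p:split}. This translation action is a homomorphism $Z\to\R$ (the real line of translations being torsion-free abelian) sending $\lambda$ to $\|\lambda\|\ne 0$.

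The key tension is then immediate: such a homomorphism factors through $H_1(Z,\Z)$ (indeed through $H_1(Z,\R)$), so a nonzero value on $\lambda$ forces $[\lambda]\ne 0$ in $H_1(Z,\Z)\otimes\R$, hence $[\lambda]$ has infinite order in $H_1(Z,\Z)$. This directly contradicts Lemma \ref{l:dehn}, which asserts that $[\lambda]=0$ in $H_1(Z,\Z)$ whenever $n\ge 6$. First I would spell out this contradiction for the left generators $\l_{ij}$; the right generators $\r_{ij}$ are conjugate to the left ones in ${\rm{Aut}}(F_n)$ and conjugate isometries have equal translation length, so the same conclusion applies to them. Therefore $\|\lambda\|=0$, which rules out $\lambda$ being hyperbolic.

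It remains to note what a vanishing translation length means: by definition $\|\lambda\|=0$ means $\lambda$ is either elliptic (if $\mathrm{Min}(\lambda)\ne\emptyset$) or a neutral parabolic (if $\mathrm{Min}(\lambda)=\emptyset$); a non-neutral parabolic is excluded precisely because it would have $\|\lambda\|>0$, and a hyperbolic is excluded because it too has $\|\lambda\|>0$ and has been eliminated above. This is exactly the dichotomy stated in the proposition. Note that unlike the earlier results we do not assume the action is semisimple, so we cannot invoke the Flat Torus Theorem; the argument must and does work for arbitrary isometric actions, using only that hyperbolicity yields the splitting of $\mathrm{Min}(\lambda)$.

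The one step to handle with care is the passage from translation length to homology. Lemma \ref{l:dehn} is a statement about the full centralizer $Z$ inside ${\rm{Aut}}(F_n)$, so I must make sure the homomorphism $Z\to\R$ I produce is defined on all of $Z$ and not merely on some convenient subgroup; Proposition \ref{p:split} delivers exactly this, since \emph{every} element of ${\rm{Isom}}(X)$ commuting with $\lambda$ preserves the splitting and acts by translation on the second factor, so restricting to $Z\subset{\rm{Aut}}(F_n)$ gives a genuine homomorphism $Z\to\R$ with $\lambda\mapsto\|\lambda\|$. I expect this matching of the centralizer appearing in Lemma \ref{l:dehn} with the centralizer acting on the $\R$-factor to be the only point requiring attention; everything else is a short deduction.
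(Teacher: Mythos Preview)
Your argument has a genuine gap: it only rules out the hyperbolic case, not the non-neutral parabolic case. You begin by supposing $\lambda$ is hyperbolic, which by definition means $\lambda$ is \emph{semisimple} with $\|\lambda\|>0$, and then you invoke Proposition~\ref{p:split} to get the splitting ${\rm Min}(\lambda)=Y\times\R$ and the translation homomorphism $Z\to\R$. That step requires ${\rm Min}(\lambda)\neq\emptyset$. Having derived a contradiction, you conclude ``therefore $\|\lambda\|=0$'', but what you have actually shown is only that $\lambda$ is not hyperbolic. The possibility that $\lambda$ is a non-neutral parabolic --- that is, $\|\lambda\|>0$ with ${\rm Min}(\lambda)=\emptyset$ --- is untouched by your argument, because Proposition~\ref{p:split} says nothing about such isometries. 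Your final paragraph acknowledges that the action is not assumed semisimple, yet the logic you present does not cover this case.

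The paper's proof closes this gap by appealing to a lemma of Karlsson and Margulis, which shows that \emph{any} isometry of a complete {\rm CAT}$(0)$ space with positive translation length (semisimple or not) must have infinite order in the abelianisation of its centralizer. This is a genuinely stronger fact than what Proposition~\ref{p:split} provides: in the parabolic case there is no axis and no invariant product decomposition, so one needs an asymptotic argument (via horofunctions, as in \cite{KM}) to manufacture the homomorphism to $\R$. Once that is in hand, Lemma~\ref{l:dehn} finishes the proof exactly as you outline. So your overall strategy is right, but the crucial analytic input is the Karlsson--Margulis result rather than Proposition~\ref{p:split}.
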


\begin{proof} A lemma of
Karlsson and Margulis \cite{KM} can be used to show that
if an isometry $\gamma$ of a complete {\rm{CAT$(0)$}} space has positive
translation length then $\gamma$ must have infinite
order in the abelianisation of its centralizer (cf.~the proof of
\cite{mb:bill} Theorem 1). Lemma \ref{l:dehn} completes the proof.
\end{proof}

This result places constraints on the representation theory of ${\rm{Aut}}(F_n)$.
I shall return to this point in another article but note one example here for
illustrative purposes.

\begin{corollary}\label{c:evalues}
Let $\lambda\in{\rm{Aut}}(F_n)$ be a Nielsen transformation
and let $\Phi: {\rm{Aut}}(F_n)\to {\rm{SL}}(d,\R)$ be an arbitrary
representation. If $n\ge 6$, then the eigenvalues of $\Phi(\l)$ all have modulus $1$.
\end{corollary}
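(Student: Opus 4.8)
The plan is to turn the representation $\Phi$ into an isometric action of ${\rm{Aut}}(F_n)$ on the symmetric space of ${\rm{SL}}(d,\R)$, and then read off the eigenvalue moduli of $\Phi(\l)$ from its translation length, which the preceding proposition forces to vanish.

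First I would recall that $X={\rm{SL}}(d,\R)/{\rm{SO}}(d)$, equipped with its standard Riemannian metric of nonpositive curvature, is a complete {\rm{CAT}}$(0)$ space on which ${\rm{SL}}(d,\R)$ acts by isometries; concretely one identifies $X$ with the space of positive-definite symmetric matrices of determinant $1$, with $g\in{\rm{SL}}(d,\R)$ acting by $P\mapsto gPg^{T}$ and stabiliser ${\rm{SO}}(d)$ at the basepoint $o=I$. Composing this action with $\Phi$ yields an isometric action of ${\rm{Aut}}(F_n)$ on $X$. Since $n\ge 6$, the preceding proposition applies to this action and tells us that the Nielsen transformation $\l$ has zero translation length, i.e.\ $\|\Phi(\l)\|=0$.

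Next I would invoke the standard description of translation length on $X$. In a suitable normalisation of the metric one has $d(o,m\cdot o)^2=\sum_i\big(\log s_i(m)\big)^2$, where the $s_i(m)$ are the singular values of $m\in{\rm{SL}}(d,\R)$; in particular $d(o,m\cdot o)=0$ iff $m\in{\rm{SO}}(d)$. Writing a general point as $x=p\cdot o$, the displacement of $g=\Phi(\l)$ is $d(x,g\cdot x)=d(o,(p^{-1}gp)\cdot o)$, so that
\[
\|g\|^2=\inf_{p\in{\rm{SL}}(d,\R)}\ \sum_i\big(\log s_i(p^{-1}gp)\big)^2 .
\]
The analytic heart of the matter is that this infimum equals $\sum_i(\log|\mu_i|)^2$, where $\mu_1,\dots,\mu_d$ are the complex eigenvalues of $g$: conjugating $g$ toward its Jordan form drives the singular values toward the eigenvalue moduli, while Weyl's inequalities keep those moduli as a lower bound for the singular values throughout. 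Equivalently, the translation length of $g$ coincides with that of the positive-definite (hyperbolic) factor in the Jordan decomposition of $g$, whose eigenvalues are precisely the $|\mu_i|$.

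Finally, $\|\Phi(\l)\|=0$ forces $\sum_i(\log|\mu_i|)^2=0$, hence $|\mu_i|=1$ for every eigenvalue of $\Phi(\l)$, which is the assertion. I expect the middle step to be the main obstacle: one must verify that the conjugation-infimum of the singular-value functional is governed exactly by the eigenvalue moduli, and in particular that vanishing translation length excludes any eigenvalue of modulus $\ne 1$ even when $\Phi(\l)$ acts as a neutral parabolic rather than as a semisimple isometry—so that neither the unipotent nor the elliptic part of the Jordan decomposition can disguise a hyperbolic contribution.
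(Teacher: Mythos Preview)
Your proof is correct and follows essentially the same route as the paper: both pass to the symmetric space ${\rm{SL}}(d,\R)/{\rm{SO}}(d)$, apply the preceding proposition to force $\|\Phi(\l)\|=0$, and then invoke the standard fact that an element of ${\rm{SL}}(d,\R)$ has positive translation length precisely when the hyperbolic factor in its Jordan decomposition is nontrivial, i.e.\ when some eigenvalue has modulus $\ne 1$. The paper states this last fact in one line, whereas you unpack it via the singular-value description of the metric and Weyl's inequalities; the content is the same.
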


\begin{proof} The symmetric space ${\rm{SL}}(d,\R)/{\rm{SO}}(d,\R)$  is non-positively
curved and the action of $M\in {\rm{SL}}(n,\R)$ has positive translation
length if $M$ has an eigenvalue of modulus greater than 1 (equivalently, the hyperbolic
component in its Jordan decomposition is non-trivial).
\end{proof}

\section{${\rm{Aut}}(F_n)$ and ${\rm{Out}}(F_n)$ are not K\"ahler groups}\label{s:kahler}

One can deduce from Corollary \ref{c:evalues} that the standard representation 
${\rm{Aut}}(F_n)\to{\rm{GL}}(n,\Z)$ (given by the action of
${\rm{Aut}}(F_n)$ on $H_1(F_n,\Z)$) cannot be deformed locally to anything but a 
conjugate representation. A stonger result was proved by Dyer and Formanek \cite{DF}
(see also \cite{rapin} Theorem 1.2): 
every representation ${\rm{Aut}}(F_n)\to {\rm{GL}}(n,\mathbb C)$ factors through the standard representation.
Using Proposition 3.1 of \cite{BV11} one can extend the proof  in \cite{rapin} to cover
the unique index-2 subgroup ${\rm{SAut}}(F_n)
\subset{\rm{Aut}}(F_n)$.

Every finitely presented
group is the fundamental group of a closed symplectic manifold and of a closed complex manifold,
but not every finitely presented group is a K\"ahler group, i.e. the fundamental group of a closed K\"ahler manifold (see
\cite{A+} for context and references). For example, a K\"ahler group cannot split over a finite group
as an alamagamated free product or HNN extension, it cannot have a subgroup of finite index that
has odd first betti number, and it cannot be an extension of a 
group with infinitely many ends by a finitely generated group. This last
condition covers ${\rm{Aut}}(F_2)$ and ${\rm{Out}}(F_2)$.
But if $n\ge 3$
then ${\rm{Aut}}(F_n)$ has property FA and it seems likely (but is unproved) that all of its subgroups of
finite index have finite abelianization. There is, however,
 a more subtle obstruction coming from Simpson's work on
non-abelian Hodge theory that one can use to show that ${\rm{Aut}}(F_n)$
and ${\rm{Out}}(F_n)$ are not K\"ahler.

Simpson \cite{carlos} proves that if a group 
$\G$ admits a representation $\rho:\G\to {\rm{GL}}(n,\mathbb C)$
with image ${\rm{SL}}(n,\Z)$, where $n\ge 3$, and if this representation cannot be deformed locally into a non-conjugate 
representation, then $\Gamma$ is not K\"ahler. In more detail, if $\G$ were K\"ahler then the
real Zariski closure of $\rho(\G)$ would be a group of Hodge type (\cite{carlos},
Lemma 4.4), but ${\rm{SL}}(n,\R)$ is not of Hodge type ($n\ge 3$) --- see
\cite{carlos}, p.50--51.
We noted above that the standard representation of  ${\rm{SAut}}(F_n)$ is rigid.
 And a finite index subgroup of a K\"ahler group is (obviously) a K\"ahler group.

\bigskip
 \end{document}